

\documentclass[preprint,12pt]{elsarticle}




\usepackage{amssymb,amsmath,lineno,hyperref,geometry}
\numberwithin{equation}{section}
\newtheorem{theorem}{Theorem}[section]
\newtheorem{lemma}{Lemma}[section]

\newtheorem{definition}{Definition}[section]

\newtheorem{proposition}{Proposition}[section]


\journal{Journal}

\begin{document}

\newcommand\tbbint{{-\mkern -16mu\int}}
\newcommand\tbint{{\mathchar '26\mkern -14mu\int}}
\newcommand\dbbint{{-\mkern -19mu\int}}
\newcommand\dbint{{\mathchar '26\mkern -18mu\int}}
\newcommand\bint{
{\mathchoice{\dbint}{\tbint}{\tbint}{\tbint}}
}
\newcommand\bbint{
{\mathchoice{\dbbint}{\tbbint}{\tbbint}{\tbbint}}
}

\begin{frontmatter}



\title{{\bf Global existence and finite time blowup for a mixed pseudo-parabolic $p$-Laplacian type equation}}

\author{Jiazhuo Cheng}
\ead{chengjzh5@mail2.sysu.edu.cn}
\author{Qiru Wang\corref{cor1}
}
\ead{mcswqr@mail.sysu.edu.cn}

\cortext[cor1]{Corresponding author}
\address{School of Mathematics, Sun Yat-sen University, Guangzhou 510275, Guangdong, China}


\begin{abstract}
This paper concerns the initial-boundary value problem for a mixed pseudo-parabolic $p$-Laplacian type equation. By constructing a family of potential wells, we first present the explicit expression for the depth of potential well, and then prove the existence, uniqueness and decay estimate of global solutions and the blowup phenomena of solutions with subcritical initial energy. Next, we extend parallelly these results to the critical initial energy. Lastly, the existence, uniqueness and asymptotic behavior of global solutions with supercritical initial energy are proved by further analyzing the properties of $\omega$-limits of solutions.

\end{abstract}

\begin{keyword}
A mixed pseudo-parabolic $p$-Laplacian type equation; global existence, uniqueness and decay estimate; finite time blowup; potential wells



\MSC[2020] 35K20, 35K58, 35K91, 35D30.

\end{keyword}

\end{frontmatter}


\section{Introduction}
\label{section1}

In this paper, we consider the following semilinear pseudo-parabolic equation with initial-boundary conditions
\begin{eqnarray}\label{1.1}
\left\{\begin{split}
& u_t-\mathrm{div}\left(\left|\nabla u\right|^{p-2}\nabla u\right)-\triangle u_t=\left|u\right|^{q-1}u-\bbint_\Omega\left|u\right|^{q-1}u\operatorname dx,~~ &\mathrm{in}\;\Omega\times\left(0,T\right),\\
&\frac{\partial u}{\partial\nu}\left(x,t\right)=0,   &\mathrm{on}\;\partial\Omega\times\left(0,T\right),\\
& u\left(x,0\right)=u_0\left(x\right)\not\equiv0,~~\bbint_\Omega u_0\operatorname dx=0,  &\mathrm{in}\;\Omega,
\end{split}\right.
\end{eqnarray}
where, (1) the first equation of (\ref{1.1}) is a mixed pseudo-parabolic $p$-Laplacian type equation, $p>1, \;\max\left\{p-1,1\right\}<q<\frac{np}{n-p}-1$, $T\in(0,\infty\rbrack$, $\Omega\subset \mathbb{R}^n\left(n\geq1\right)$ is a bounded domain with smooth boundary, $\bbint_\Omega u\operatorname dx=\frac1{\left|\Omega\right|}\int_\Omega u\operatorname dx$ and $\bbint_\Omega\left|u\right|^{q-1}u\operatorname dx$ is to conserve the spatial integral of
the unknown function over time; (2) the second equation of (\ref{1.1}) is Neumann boundary condition, $\frac\partial{\partial\nu}$ denotes differentiation with respect to the outward normal $\nu$ on $\partial \Omega$; (3) the third equation of (\ref{1.1}) is initial boundary condition.

Problem (\ref{1.1}) has attracted much attentions in recent years in the fields of population dynamics and biological sciences \cite{1,2,3,4}, where the total mass is often conserved or known. Such equations can provide insight into biological and chemical problems where conservation properties predominate. Since in mathematics we do not require $u(x,y)$ to be non-negative, we use $\left|u\right|^{q-1}u$ instead of $u^q$ in the problem (\ref{1.1}) (see \cite{5}). In addition, when affected by a variety of factors, such as atoms, ions, etc., the $\triangle u$ term in the pseudo-parabolic is usually replaced by $\mathrm{div}\left(\left|\nabla u\right|^{p-2}\nabla u\right)$.

The pseudo-parabolic equation
\begin{equation}\label{1.4}
u_t-u_{xxt}=F\left(x,t,u_x,u_{xx}\right)
\end{equation}
has been extensively studied by many authors \cite{6,7,8,9,10}. From 2013 to 2018, the authors in \cite{11,12,13,14} investigated the initial boundary value problem  (\ref{1.4}) with $F\left(x,t,u_x,u_{xx}\right)=u_{xx}+u^p$. They proved global existence and asymptotic behavior of solutions with subcritical and critical initial energy $J(u_0)\leq d$, further gave the global nonexistence of solutions under supercritical initial energy $J(u_0)>d$ through comparison principle, and the upper bound of the blowup time of supercritical initial energy is estimated. In 2021, as an expansion of previous studies, Wang and Xu \cite{15} considered the following semilinear pseudo-parabolic equation with Neumann boundary condition
\begin{equation*}
u_t-\triangle u-\triangle u_t=\left|u\right|^{p-1}u-\bbint_\Omega\left|u\right|^{p-1}u\operatorname dx,
\end{equation*}
and proved global existence, uniqueness, asymptotic behavior and blowup of solutions with subcritical and critical initial energy $J(u_0)\leq d$.

In 2007, Yin and Jin  \cite{16} studied the following equation
\begin{equation}\label{1.2}
u_t-\mathrm{div}\left(\left|\nabla u^m\right|^{p-2}\nabla u^m\right)=\lambda\left|u\right|^q,
\end{equation}
and proved the finite time blowup when $m=1,\;1<p<2,\;q>1,\;\lambda>0$. In 2009, Jin et al. \cite{17} considered problem (\ref{1.2}) with $0<m\left(p-1\right)<1,\;q>0,\;\lambda>0$, and the finite time blowup was proved for $q>m\left(p-1\right)$. In 2014, Qu et al. \cite{18} studied the nonlocal $p$-Laplace equation
\begin{equation*}
u_t-\mathrm{div}\left(\left|\nabla u\right|^{p-2}\nabla u\right)=\left|u\right|^q-\frac1{\left|\Omega\right|}\int_\Omega^{}\left|u\right|^q\operatorname dx
\end{equation*}
with Neumann boundary condition, and proved that the solution blows up for special initial energy, i.e., $J(u_0)\leq0$.

Compared with the above studies, there are relatively few studies on the mixed pseudo parabolic $p$-Laplace equations, see \cite{19,20,21,22,23,Ding}. In 2003, Liu \cite{22} studied the following nonlinear pseudo parabolic $p$-Laplace equation
\begin{equation}\label{1.6}
u_t-k\triangle u_t-\mathrm{div}\left(\left|\nabla u\right|^{p-2}\nabla u\right)=f\left(u\right),
\end{equation}
and proved the existence and uniqueness of solutions for $p>2,\;k>0,\;f\left(u\right)=0$. In 2013, Li et al. \cite{23} extended the problem (\ref{1.6}) to $p\geq2,\;0<q<p-1,\;k>0$, and proved the existence and asymptotic behavior of solutions with $f\left(u\right)=u^q$. In 2018, Cao and Liu \cite{24} considered the equation (\ref{1.6}) for $1<p<2,\;k\geq0,\;f\left(u\right)=\left|u\right|^{p-2}u\log\left|u\right|$ and obtained the global existence and asymptotic behavior of  solutions by constructing a family of potential wells.

In order to overcome the difficulties encountered in the prior estimation of solutions using Galerkin method, we use the potential well method proposed by Sattinger \cite{25} in 1968. Liu et al. \cite{26,27} extended and improved the method by introducing a family of potential wells and taking the known potential wells as a special case. Now, it is one of the most useful methods to prove the global existence and nonexistence of solutions and the vacuum isolation of solutions for parabolic equations \cite{28,29}.

It is interesting to consider (\ref{1.1}) due to the role of the corresponding conservation properties in the real world and its connection to the biological and chemical model equations (\ref{1.6}). In addition, keeping balance is an arduous task between the bad side and the good side of the strong dissipation term $\triangle u_t$, which can not only help the global existence by decaying energy, but also impede blowup and delay the blowup time. In this paper, it is important to deal with the blowup situation and maintain this balance, but it is not easy. Therefore, we combine the theory of potential wells with the Galerkin method to overcome the existing difficulties, and obtain the global existence, uniqueness, asymptotic behavior and finite time blow up of solutions.

The structure of this paper is organized as follows: In Section 2, we give some notations, definitions and lemmas about the basic properties of the related functionals and sets. Also, we present the main results of this paper. In Sections 3 and 4, we are devoted to the subcritical initial energy $J(u_0)<d$ and the critical initial energy $J(u_0)=d$, respectively, and prove the existence, uniqueness, decay estimate of global solutions and the blowup phenomena of solutions. In Section 5, we prove the global existence, uniqueness and asymptotic behavior of solutions for the supercritical initial energy $J\left(u_0\right)>d$ by analyzing the properties of $\omega$-limits of solutions. The conclusion is made in Section 6.

\section{Preliminaries and main results}

Throughout the whole paper, let $C$ represent generic positive constant, which may change from line to line. Let $L^p\left(\Omega\right)$ be the set of all measurable functions on $\Omega$ and satisfying $\int_\Omega\left|u\left(x\right)\right|^p\operatorname dx<\infty$. For $1\leq p\leq\infty$, denote by ${\left\|\cdot\right\|}_p$ the $L^p\left(\Omega\right)$ norm, i.e.,
\begin{equation*}
{\left\|u\right\|}_p=\left\{\begin{array}{l}\left(\int_\Omega\left|u\left(x\right)\right|^p\operatorname dx\right)^\frac1p,\;\;\;\mathrm{if}\;1\leq p<\infty;\\\mathrm{ess}\;\underset{x\in\Omega}{\mathrm{sup}}\left|u\left(x\right)\right|,\;\;\;\;\;\;\;\mathrm{if}\;p=\infty,\end{array}\right.
\end{equation*}
 where $\forall u\in L^p\left(\Omega\right)$. For $1\leq p\leq+\infty$, denote
 \begin{equation*}
W^{m,p}\left(\Omega\right)=\left\{u\in L^p\left(\Omega\right)|D^\alpha u\in L^p\left(\Omega\right),\forall\alpha\in Z_+^n,\left|\alpha\right|\leq m\right\},
 \end{equation*}
 and
  \begin{equation*}
W_N^{{m,p}}\left(\Omega\right)=\left\{u\in W^{m,p}\left(\Omega\right)|{\left.\frac{\partial u}{\partial\nu}\right|}_{\partial\Omega}=0,\int_\Omega u\operatorname dx=0\right\},
 \end{equation*}
where $D^\alpha u$ represents the $\alpha$ order weak derivative of $u$.

When $p=2$, we can write $H^k\left(\Omega\right)=W^{k,2}\left(\Omega\right)\left(k=0,1,\cdots\right)$. Let $\left(\cdot,\cdot\right)$ be the inner product in $L^2\left(\Omega\right)$ and ${\left\|\cdot\right\|}_{H^1}$ be the norm of $H^1\left(\Omega\right)$. That is
\begin{equation*}
\left(u,v\right)=\int_\Omega u\left(x\right)v\left(x\right)\operatorname dx,\;\;\;\forall u,v\in L^2\left(\Omega\right),
\end{equation*}
and
\begin{equation*}
{\left\|u\right\|}_{H^1}=\left(\left\|u\right\|_2^2+\left\|\nabla u\right\|_2^2\right)^\frac12,\;\;\;\forall u\in H^1\left(\Omega\right),
\end{equation*}
then the norm ${\left\|\cdot\right\|}_{H^1}$ is equivalent to the norm ${\left\|\nabla\left(\cdot\right)\right\|}_2$.

Next, we define the energy functional $J\left(u\right)$ and the Nehari functional $I\left(u\right)$ as
\begin{equation}\label{2.1}
\begin{array}{l}J\left(u\right)=\frac1p\left\|\nabla u\right\|_p^p-\frac1{q+1}\left\|u\right\|_{q+1}^{q+1},\\I\left(u\right)=\left\|\nabla u\right\|_p^p-\left\|u\right\|_{q+1}^{q+1},\end{array}
\end{equation}
and define the Nehari manifold as
\begin{equation*}
\mathcal N=\left\{\left.u\in W_N^{1,p}\left(\Omega\right)\right|I\left(u\right)=0,{\left\|\nabla u\right\|}_p\neq0\right\}.
\end{equation*}

By $J\left(u\right)$ and $I\left(u\right)$, we set
\begin{equation*}
\begin{array}{l}W=\left\{\left.u\in W_N^{1,p}\left(\Omega\right)\right|J\left(u\right)<d,I\left(u\right)>0\right\}\cup\left\{0\right\},\\V=\left\{\left.u\in W_N^{1,p}\left(\Omega\right)\right|J\left(u\right)<d,I\left(u\right)<0\right\},\end{array}
\end{equation*}
where
\begin{equation*}
d=\underset{u\in \mathcal N}{\mathrm{inf}}J\left(u\right)
\end{equation*}
is the depth of the potential well.

For $\delta>0$, we introduce a family of potential wells as follows
\begin{equation}\label{2.211}
\begin{array}{l}I_\delta\left(u\right)=\delta\left\|\nabla u\right\|_p^p-\left\|u\right\|_{q+1}^{q+1},\\{\mathcal N}_\delta=\left\{\left.u\in W_N^{1,p}\left(\Omega\right)\right|I_\delta\left(u\right)=0,{\left\|\nabla u\right\|}_p\neq0\right\},\\W_\delta=\left\{\left.u\in W_N^{1,p}\left(\Omega\right)\right|J\left(u\right)<d\left(\delta\right),I_\delta\left(u\right)>0\right\}\cup\left\{0\right\},\\V_\delta=\left\{\left.u\in W_N^{1,p}\left(\Omega\right)\right|J\left(u\right)<d\left(\delta\right),I_\delta\left(u\right)<0\right\},\\d\left(\delta\right)=\underset{u\in{\mathcal N}_\delta}{\mathrm{inf}}J\left(u\right).\end{array}
\end{equation}

In what follows, we give some sets and functionals for weak solutions with high energy levels. Set
\begin{equation*}
\begin{array}{l}
  {\mathcal N}_+=\left\{u\in W_N^{1,p}\left(\Omega\right)\left|I\left(u\right)>0\right.\right\},\\
  J^\alpha=\left\{u\in W_N^{1,p}\left(\Omega\right)\left|J\left(u\right)\leq\alpha\right.\right\},\\\mathcal N^\alpha=\mathcal N\cap J^\alpha=\left\{u\in\mathcal N\left|J\left(u\right)\leq\alpha\right.\right\},\;\;\;\forall\alpha>d,\\\lambda_\alpha=\inf\left\{\left\|u\right\|_{H^1}^2\left|u\in\right.\mathcal N^\alpha\right\},\;\;\;\forall\alpha>d.
  \end{array}
\end{equation*}
Obviously, $\lambda_\alpha$  is non-increasing with respect to $\alpha$.

Integrating the first equation of (\ref{1.1}) with respect to $x$ over $\Omega$ and then with respect to $t$ from $0$ to $t$, we obtain $\int_\Omega u\operatorname dx=\int_\Omega u_0\operatorname dx=0$. That is, we have the conservation law for problem (\ref{1.1}). Before stating the main theorems, we first give some definitions and lemmas.

As in \cite[Page 8]{Evans}, ``In general, as we shall see, the conservation law has no classical solutions, but {\it is} well-posed if we allow for properly defined {\it generalized} or {\it weak solutions}. This is all to say that we may be forced by the structure of the particular equation to abandon the search for smooth, classical solutions. We must instead, while still hoping to achieve the well-posedness conditions (a)-(c), investigate a wider class of candidates for solutions. And in fact, even for those PDE which turn out to be classically solvable, it is often most expedient initially to search for some appropriate kind of weak solution."

\begin{definition}[Weak solution \cite{Evans}]\label{2.1}
Function $u\left(x,t\right)$ is called a weak solution to problem (\ref{1.1}) on $\Omega\times\lbrack0,T)$, if $u\in L^\infty\left(0,T;W_N^{1,p}\left(\Omega\right)\right)$, $u_t\in L^2\left(0,T;W_N^{1,2}\left(\Omega\right)\right)$, $u\left(x,0\right)=u_0\left(x\right)\in W_N^{1,p}\left(\Omega\right)$ and satisfies
\begin{equation}\label{2.331}
\begin{array}{l}\int_0^t\left(\left(u_\tau,\varphi\right)+\left(\nabla u_\tau,\nabla\varphi\right)+\left(\left|\nabla u\right|^{p-2}\nabla u,\nabla\varphi\right)\right)\operatorname d\tau\\\;\;\;\;\;\;\;\;\;\;\;\;\;\;\;\;\;\;\;\;\;\;\;\;\;\;\;\;\;\;\;\;\;\;=\int_0^t\left(\left|u\right|^{q-1}u-\bbint_\Omega\left|u\right|^{q-1}u\operatorname dx,\varphi\right)\operatorname d\tau,\end{array}
\end{equation}
for any $\varphi\in W_N^{1,p}\left(\Omega\right)\cap W_N^{1,2}\left(\Omega\right)$.

Moreover, the following equality
\begin{equation*}
  \int_0^t\left\|u_\tau\right\|_{H^1}^2\operatorname d\tau+J\left(u\right)=J\left(u_0\right)
\end{equation*}
holds for $t\in\lbrack0,T)$.
\end{definition}

\begin{definition}[Maximal existence time]\label{2.2}
If $u\left(t\right)$ is a weak solution of problem (\ref{1.1}), for maximal existence time $T$ of $u\left(t\right)$, we have the following definition.

\emph{(i)} For $0\leq t<\infty$, if $u\left(t\right)$ exists, then $T=+\infty$;

\emph{(ii)} For $0<t_{0}<\infty$, if $0\leq t<t_0$, then $u\left(t\right)$ exists,  but doesn't exist at $t=t_0$, then $T=t_0$.
\end{definition}

\begin{lemma}[Relations between $I_\delta\left(u\right)$ and ${\left\|\nabla u\right\|}_p$]\label{2.2}
Let $u\in W_N^{1,p}\left(\Omega\right)$.

\emph{(i)} If $0<{\left\|\nabla u\right\|}_p<r\left(\delta\right)$, then $I_\delta\left(u\right)>0$;

\emph{(ii)} If $I_\delta\left(u\right)<0$, then ${\left\|\nabla u\right\|}_p>r\left(\delta\right)$;

\emph{(iii)} If $I_\delta\left(u\right)=0$ and ${\left\|\nabla u\right\|}_p\neq0$, then ${\left\|\nabla u\right\|}_p\geq r\left(\delta\right)$,\\
where $r\left(\delta\right)=\left(\frac\delta{C_\ast^{q+1}}\right)^\frac1{q+1-p}$, $C_\ast$ is the imbedding constant for $W^{1,p}\left(\Omega\right)\hookrightarrow L^{q+1}\left(\Omega\right)$ and satisfies
\begin{equation}\label{2.55}
\frac1{C_\ast}=\underset{u\in W_N^{1,p},u\neq0}{\inf}\frac{{\left\|\nabla u\right\|}_p}{{\left\|u\right\|}_{q+1}}.
\end{equation}
\end{lemma}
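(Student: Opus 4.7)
The plan is to prove all three statements in one shot from a single inequality obtained from the Sobolev embedding $W_N^{1,p}(\Omega)\hookrightarrow L^{q+1}(\Omega)$ recorded in (\ref{2.55}). Since that embedding constant satisfies $\|u\|_{q+1}\le C_\ast\|\nabla u\|_p$ for every $u\in W_N^{1,p}(\Omega)$, raising to the power $q+1$ gives $\|u\|_{q+1}^{q+1}\le C_\ast^{q+1}\|\nabla u\|_p^{q+1}$, and substituting into the definition of $I_\delta$ yields the master estimate
\begin{equation*}
I_\delta(u)\;=\;\delta\|\nabla u\|_p^{p}-\|u\|_{q+1}^{q+1}\;\ge\;\|\nabla u\|_p^{p}\Bigl(\delta-C_\ast^{q+1}\|\nabla u\|_p^{q+1-p}\Bigr).
\end{equation*}
Note that the exponent $q+1-p$ is strictly positive because of the standing hypothesis $q>p-1$, so the threshold $r(\delta)=(\delta/C_\ast^{q+1})^{1/(q+1-p)}$ is well-defined and the map $s\mapsto C_\ast^{q+1}s^{q+1-p}$ is strictly increasing on $[0,\infty)$.

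From here each of the three conclusions is essentially one line. For (i), if $0<\|\nabla u\|_p<r(\delta)$, then $C_\ast^{q+1}\|\nabla u\|_p^{q+1-p}<\delta$, so the right-hand side above is strictly positive, giving $I_\delta(u)>0$. For (ii), the contrapositive is the right vehicle: if $\|\nabla u\|_p\le r(\delta)$, then either $\|\nabla u\|_p=0$ (forcing $u\equiv 0$ by the Poincaré-type inequality implicit in the norm on $W_N^{1,p}$, hence $I_\delta(u)=0$) or $0<\|\nabla u\|_p<r(\delta)$ gives $I_\delta(u)>0$ by (i), and the boundary case $\|\nabla u\|_p=r(\delta)$ gives $I_\delta(u)\ge 0$; in every case $I_\delta(u)\ge 0$, contradicting $I_\delta(u)<0$. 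For (iii), the equality $I_\delta(u)=0$ combined with $\|u\|_{q+1}^{q+1}\le C_\ast^{q+1}\|\nabla u\|_p^{q+1}$ gives $\delta\|\nabla u\|_p^{p}\le C_\ast^{q+1}\|\nabla u\|_p^{q+1}$, and dividing by $\|\nabla u\|_p^{p}>0$ isolates $\|\nabla u\|_p^{q+1-p}\ge \delta/C_\ast^{q+1}$, i.e.\ $\|\nabla u\|_p\ge r(\delta)$.

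There is no genuine obstacle here; the lemma is a clean consequence of the Sobolev--Poincaré embedding. The only point worth double-checking while writing is that the infimum defining $1/C_\ast$ in (\ref{2.55}) is taken over the zero-mean subspace $W_N^{1,p}$, which is precisely what allows the embedding $\|u\|_{q+1}\le C_\ast\|\nabla u\|_p$ to hold without a full $W^{1,p}$ norm on the right (Poincaré's inequality on the zero-mean subspace), and that the range $\max\{p-1,1\}<q<\frac{np}{n-p}-1$ ensures both $q+1-p>0$ and the validity of the embedding. With these two ingredients in place, the three parts follow by a direct comparison with the threshold $r(\delta)$.
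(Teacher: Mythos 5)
Your proof is correct and follows essentially the same route as the paper: both hinge on the embedding inequality $\|u\|_{q+1}^{q+1}\le C_\ast^{q+1}\|\nabla u\|_p^{q+1-p}\|\nabla u\|_p^{p}$ and a direct comparison of $\|\nabla u\|_p$ with the threshold $r(\delta)$. The only cosmetic difference is that you prove (ii) by contrapositive while the paper argues directly from $I_\delta(u)<0$; the underlying computation is identical.
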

\begin{proof}
(i) By $0<{\left\|\nabla u\right\|}_p<r\left(\delta\right)$, we obtain
\begin{equation*}
\left\|u\right\|_{q+1}^{q+1}\leq C_\ast^{q+1}\left\|\nabla u\right\|_p^{q+1}=C_\ast^{q+1}\left\|\nabla u\right\|_p^{q+1-p}\left\|\nabla u\right\|_p^p<\delta\left\|\nabla u\right\|_p^p,
\end{equation*}
then $I_\delta\left(u\right)>0$.

(ii) From $I_\delta\left(u\right)<0$, we know that ${\left\|\nabla u\right\|}_p\neq0$. Then
\begin{equation*}
\delta\left\|\nabla u\right\|_p^p<\left\|u\right\|_{q+1}^{q+1}\leq C_\ast^{q+1}\left\|\nabla u\right\|_p^{q+1}=C_\ast^{q+1}\left\|\nabla u\right\|_p^{q+1-p}\left\|\nabla u\right\|_p^p
\end{equation*}
i.e., ${\left\|\nabla u\right\|}_p>r\left(\delta\right)$.

(iii) If $I_\delta\left(u\right)=0$, ${\left\|\nabla u\right\|}_p\neq0$,  we see that
\begin{equation*}
\delta\left\|\nabla u\right\|_p^p=\left\|u\right\|_{q+1}^{q+1}\leq C_\ast^{q+1}\left\|\nabla u\right\|_p^{q+1}=C_\ast^{q+1}\left\|\nabla u\right\|_p^{q+1-p}\left\|\nabla u\right\|_p^p,
\end{equation*}
which implies ${\left\|\nabla u\right\|}_p\geq r\left(\delta\right)$.
\end{proof}

\begin{lemma}[Properties of $J(\lambda u)$]\label{2.3}
Assume $u\in W_N^{1,p}\left(\Omega\right)$ and $\left\|u\right\|_{q+1}^{}\neq0$. Then

\emph{(i)} $J\left(\lambda u\right)$ is increasing on $\lambda\in\left[0,\lambda^\ast\right]$ and decreasing on $\lambda\in\lbrack\lambda^\ast,\infty)$, $\lambda=\lambda^\ast$ is the maximum point of $J\left(\lambda u\right)$. Furthermore, $\underset{\lambda\rightarrow0}{\mathrm{lim}}J\left(\lambda u\right)=0,\;\underset{\lambda\rightarrow+\infty}{\mathrm{lim}}J\left(\lambda u\right)=-\infty$;

\emph{(ii)} $I\left(\lambda u\right)>0$ for $0<\lambda<\lambda^\ast$ and $I\left(\lambda u\right)<0$ for $\lambda^\ast<\lambda<\infty$, and
$I\left(\lambda^\ast u\right)=0$, where
\begin{equation*}
  \lambda^\ast=\left(\frac{\left\|\nabla u\right\|_p^p}{\left\|u\right\|_{q+1}^{q+1}}\right)^\frac1{q-p+1}.
\end{equation*}
\end{lemma}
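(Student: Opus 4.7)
The plan is to reduce both assertions to the analysis of a single scalar function of $\lambda$. Using the homogeneity of the norms, I would write
\begin{equation*}
J(\lambda u)=\frac{\lambda^{p}}{p}\|\nabla u\|_p^p-\frac{\lambda^{q+1}}{q+1}\|u\|_{q+1}^{q+1}=:f(\lambda),\qquad \lambda\ge 0,
\end{equation*}
and similarly $I(\lambda u)=\lambda^{p}\|\nabla u\|_p^p-\lambda^{q+1}\|u\|_{q+1}^{q+1}$. The assumption $\|u\|_{q+1}\neq 0$ makes the coefficients nonzero, and the standing hypothesis $q>\max\{p-1,1\}$ gives $q+1>p$, which is what makes the scalar analysis behave as claimed.

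For part (i), I would differentiate to obtain
\begin{equation*}
f'(\lambda)=\lambda^{p-1}\|\nabla u\|_p^p-\lambda^{q}\|u\|_{q+1}^{q+1}=\lambda^{p-1}\!\left(\|\nabla u\|_p^p-\lambda^{q-p+1}\|u\|_{q+1}^{q+1}\right).
\end{equation*}
Because $q-p+1>0$, the unique positive root of $f'$ is exactly
\begin{equation*}
\lambda^{\ast}=\left(\frac{\|\nabla u\|_p^p}{\|u\|_{q+1}^{q+1}}\right)^{\frac{1}{q-p+1}},
\end{equation*}
with $f'>0$ on $(0,\lambda^{\ast})$ and $f'<0$ on $(\lambda^{\ast},\infty)$, giving monotonicity and identifying $\lambda^{\ast}$ as the maximiser. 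The limits $f(\lambda)\to 0$ as $\lambda\to 0$ and $f(\lambda)\to -\infty$ as $\lambda\to\infty$ follow by inspection, using $q+1>p$ for the latter so that the negative term dominates.

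For part (ii), I would simply note the algebraic identity $I(\lambda u)=\lambda f'(\lambda)$, so the sign of $I(\lambda u)$ for $\lambda>0$ coincides with the sign of $f'(\lambda)$. The conclusions $I(\lambda u)>0$ on $(0,\lambda^{\ast})$, $I(\lambda u)<0$ on $(\lambda^{\ast},\infty)$, and $I(\lambda^{\ast}u)=0$ then follow immediately from the analysis already carried out in part (i).

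There is no real obstacle here; the only thing to be careful about is invoking the hypothesis $q>p-1$ (hence $q-p+1>0$ and $q+1>p$) at the two places it is genuinely needed, namely to ensure that $\lambda^{\ast}$ is well-defined and finite, and that $J(\lambda u)\to-\infty$ as $\lambda\to\infty$. Everything else is a routine calculus computation on the explicit polynomial-like function $f(\lambda)$.
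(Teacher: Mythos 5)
Your proposal is correct and follows essentially the same route as the paper: write $J(\lambda u)$ as an explicit function of $\lambda$, differentiate to locate the unique critical point $\lambda^\ast$, read off the monotonicity and the limits, and use the identity $I(\lambda u)=\lambda\,\frac{dJ(\lambda u)}{d\lambda}$ for part (ii). Your explicit tracking of where $q-p+1>0$ is needed is a minor (and welcome) addition, but the argument is the same.
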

\begin{proof}
 From the definition of $J(u)$, we can find that
 \begin{equation*}
   J\left(\lambda u\right)=\frac{\lambda^p}p\left\|\nabla u\right\|_p^p-\frac{\lambda^{q+1}}{q+1}\left\|u\right\|_{q+1}^{q+1},
 \end{equation*}
which gives $\underset{\lambda\rightarrow0}{\mathrm{lim}}J\left(\lambda u\right)=0,\;\underset{\lambda\rightarrow+\infty}{\mathrm{lim}}J\left(\lambda u\right)=-\infty$. On the other hand, we see that
\begin{equation}\label{2.9}
\frac{dJ\left(\lambda u\right)}{d\lambda}=\lambda^{p-1}\left(\left\|\nabla u\right\|_p^p-\lambda^{q-p+1}\left\|u\right\|_{q+1}^{q+1}\right).
\end{equation}
Considering ${\left.\frac{dJ\left(\lambda u\right)}{d\lambda}\right|}_{\lambda=\lambda^\ast}=0$, we have
\begin{equation*}
  \lambda^\ast=\left(\frac{\left\|\nabla u\right\|_p^p}{\left\|u\right\|_{q+1}^{q+1}}\right)^\frac1{q-p+1}.
\end{equation*}
Taking into account (\ref{2.9}), we obtain
\begin{equation*}
\begin{array}{l}\frac{dJ\left(\lambda u\right)}{d\lambda}>0,\;\;\;\mathrm{for}\;0<\lambda<\lambda^\ast,\\\frac{dJ\left(\lambda u\right)}{d\lambda}<0,\;\;\;\mathrm{for}\;\lambda^\ast<\lambda<\infty.\end{array}
\end{equation*}
In addition,  it is easy to see that
\begin{equation*}
I\left(\lambda u\right)=\lambda\frac{dJ\left(\lambda u\right)}{d\lambda}=\left\{\begin{array}{l}>0,\;\;\;0<\lambda<\lambda^\ast,\\=0,\;\;\;\lambda=\lambda^\ast,\\<0,\;\;\;\lambda^\ast<\lambda<\infty.\end{array}\right.
\end{equation*}
\end{proof}

\begin{lemma}[Depth $d$ of potential well]\label{2.4}
Let $p$ and $q$ satisfy the conditions given by (\ref{1.1}). For the depth $d$ of potential well, we have
\begin{equation*}
d=\frac{q+1-p}{p\left(q+1\right)}C_*^{-\frac{p(q+1)}{q+1-p}}>0.
\end{equation*}
\end{lemma}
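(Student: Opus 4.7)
The plan is to combine the rewriting of $J$ on the Nehari manifold with the sharp constant (\ref{2.55}) and the projection onto $\mathcal N$ provided by Lemma \ref{2.3}.

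First I would establish the lower bound. For any $u\in\mathcal N$ we have $I(u)=0$, i.e.\ $\|\nabla u\|_p^p=\|u\|_{q+1}^{q+1}$, so the definition (\ref{2.1}) of $J$ collapses to
\begin{equation*}
J(u)=\left(\frac1p-\frac1{q+1}\right)\|\nabla u\|_p^p=\frac{q+1-p}{p(q+1)}\|\nabla u\|_p^p.
\end{equation*}
Applying Lemma \ref{2.2}(iii) with $\delta=1$, which gives $\|\nabla u\|_p\geq r(1)=C_\ast^{-(q+1)/(q+1-p)}$, yields
\begin{equation*}
J(u)\geq\frac{q+1-p}{p(q+1)}\,C_\ast^{-\frac{p(q+1)}{q+1-p}}\qquad\forall u\in\mathcal N,
\end{equation*}
and hence $d\geq\frac{q+1-p}{p(q+1)}C_\ast^{-\frac{p(q+1)}{q+1-p}}$.

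For the reverse inequality I would produce a minimizing sequence for $J$ on $\mathcal N$ by using the characterization (\ref{2.55}) of the imbedding constant. Pick $\{u_k\}\subset W_N^{1,p}(\Omega)$ with $u_k\neq0$ and $\|\nabla u_k\|_p/\|u_k\|_{q+1}\to 1/C_\ast$. For each $k$, Lemma \ref{2.3} produces a unique scaling factor
\begin{equation*}
\lambda_k^\ast=\left(\frac{\|\nabla u_k\|_p^p}{\|u_k\|_{q+1}^{q+1}}\right)^{1/(q-p+1)}
\end{equation*}
such that $v_k:=\lambda_k^\ast u_k\in\mathcal N$. A direct computation then gives
\begin{equation*}
\|\nabla v_k\|_p^p=(\lambda_k^\ast)^p\|\nabla u_k\|_p^p=\left(\frac{\|\nabla u_k\|_p}{\|u_k\|_{q+1}}\right)^{\!p(q+1)/(q-p+1)}\longrightarrow C_\ast^{-\frac{p(q+1)}{q+1-p}}.
\end{equation*}
Combined with the formula $J(v_k)=\frac{q+1-p}{p(q+1)}\|\nabla v_k\|_p^p$ valid on $\mathcal N$, this produces $J(v_k)\to\frac{q+1-p}{p(q+1)}C_\ast^{-p(q+1)/(q+1-p)}$, which gives the matching upper bound on $d$.

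There is no real obstacle beyond arithmetic here; the only subtlety is making sure that the minimizing sequence for the imbedding ratio in (\ref{2.55}) can be chosen inside $W_N^{1,p}(\Omega)$ (not merely in $W^{1,p}(\Omega)$), which is exactly how $C_\ast$ is defined in (\ref{2.55}), so no further work is needed. Positivity of $d$ follows immediately from $q+1-p>0$ (guaranteed by the standing assumption $q>\max\{p-1,1\}$) and positivity of the Sobolev constant, completing the proof.
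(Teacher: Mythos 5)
Your proof is correct and rests on the same ingredients as the paper's: the identity $J(u)=\frac{q+1-p}{p(q+1)}\|\nabla u\|_p^p$ on $\mathcal N$, the projection $u\mapsto\lambda^\ast u$ onto $\mathcal N$ from Lemma \ref{2.3}, and the sharp embedding constant $C_\ast$ from (\ref{2.55}). The only difference is organizational — you split the argument into a lower bound (via Lemma \ref{2.2}(iii)) and a matching upper bound (via a minimizing sequence for the Sobolev quotient), whereas the paper obtains the value in a single chain of equalities by observing that $\|\nabla u\|_p^p$ on $\mathcal N$ equals the scale-invariant quotient $\bigl(\|\nabla u\|_p^{q+1}/\|u\|_{q+1}^{q+1}\bigr)^{p/(q+1-p)}$, whose infimum over $\mathcal N$ coincides with its infimum over all nonzero $v\in W_N^{1,p}(\Omega)$.
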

\begin{proof} If $u\in \mathcal N$, then ${\left\|\nabla u\right\|}_p\neq0$ and $I\left(u\right)=0$ (or $\left\|\nabla u\right\|_p^p-\left\|u\right\|_{q+1}^{q+1}=0$). It follows that
\begin{eqnarray*}\label{2.11}
J\left(u\right)=\frac1p\left\|\nabla u\right\|_p^p-\frac1{q+1}\left\|u\right\|_{q+1}^{q+1}=\left(\frac1p-\frac1{q+1}\right)\left\|\nabla u\right\|_p^p,
\end{eqnarray*}
which says $J\left(u\right)>0$.

From Lemma \ref{2.3} (ii), we see that $u\in\mathcal N$ must be of the form $u=\lambda^\ast v$ for $v\in W_N^{1,p}\left(\Omega\right)$, $v\neq 0$ and $\lambda^\ast=\left(\frac{\left\|\nabla v\right\|_p^p}{\left\|v\right\|_{q+1}^{q+1}}\right)^\frac1{q-p+1}$. Hence, we have
\begin{eqnarray*}
&&d=\inf_{u\in\mathcal N}J(u)=\inf_{u\in\mathcal N}\left[\left(\frac{1}{p}-\frac{1}{q+1}\right)\Vert\nabla u\Vert_p^p\right]\\
&&\quad =\left(\frac{1}{p}-\frac{1}{q+1}\right)\inf_{u\in\mathcal N}\left(\frac{\Vert\nabla u\Vert_p^{q+1}}{\Vert u\Vert_{q+1}^{q+1}}\right)^{\frac{p}{q+1-p}}\\
&&\quad =\left(\frac{1}{p}-\frac{1}{q+1}\right)\inf_{v\in W_N^{1,p}\left(\Omega\right), v\neq 0}\left(\frac{\Vert\nabla \left(\lambda^\ast v\right)\Vert_p^{q+1}}{\Vert \left(\lambda^\ast v\right)\Vert_{q+1}^{q+1}}\right)^{\frac{p}{q+1-p}}\\
&&\quad =\frac{q+1-p}{p\left(q+1\right)} C_*^{-\frac{p(q+1)}{q+1-p}}.
\end{eqnarray*}

The proof is complete.
\end{proof}

\begin{lemma}\label{2.5}
For $d(\delta)$ in (\ref{2.211}), we have

\emph{(i)} $d(\delta)\geq\frac1p(1-\delta)r^p(\delta)+\frac{q+1-p}{p\left(q+1\right)}\delta r^p(\delta)$. In particular, $d(1)\geq\frac{q+1-p}{p\left(q+1\right)}\left(\frac1{C_\ast^{q+1}}\right)^\frac p{q+1-p}$;

\emph{(ii)} there is a unique $b$, $b\in(1,\frac{q+1}p\rbrack$ such that $d(b)=0$, and $d(\delta)>0$ for $1\leq\delta<b$;

\emph{(iii)} $d(\delta)$ is increasing on $0<\delta\leq1$, decreasing on $1\leq\delta\leq b$, and $\delta=1$ is the maximum point of $d(\delta)$.
\end{lemma}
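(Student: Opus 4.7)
The strategy is to derive, for $u\in\mathcal N_\delta$, the identity
\[J(u)=\frac{q+1-p\delta}{p(q+1)}\,\|\nabla u\|_p^p=\left(\frac{1-\delta}{p}+\frac{(q+1-p)\delta}{p(q+1)}\right)\|\nabla u\|_p^p,\]
by substituting $\|u\|_{q+1}^{q+1}=\delta\|\nabla u\|_p^p$ into $J$. Part~(i) then follows by applying Lemma~\ref{2.2}(iii) to get $\|\nabla u\|_p\geq r(\delta)$ and taking the infimum over $\mathcal N_\delta$; the specialisation $\delta=1$ recovers the announced bound on $d(1)$ since $r(1)^p=(1/C_*^{q+1})^{p/(q+1-p)}$.

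For (ii) and (iii) I would parameterise $\mathcal N_\delta$ by projective scaling. For any nonzero $v\in W_N^{1,p}(\Omega)$ with $\|v\|_{q+1}\neq 0$, a direct generalisation of Lemma~\ref{2.3}(ii) yields a unique positive $\lambda^*_\delta(v)=\bigl(\delta\|\nabla v\|_p^p/\|v\|_{q+1}^{q+1}\bigr)^{1/(q+1-p)}$ with $\lambda^*_\delta(v)v\in\mathcal N_\delta$, and a short substitution shows
\[J\bigl(\lambda^*_\delta(v)v\bigr)=h(\delta)\,K(v),\qquad h(\delta):=\frac{\delta^{p/(q+1-p)}(q+1-p\delta)}{p(q+1)},\qquad K(v):=\left(\frac{\|\nabla v\|_p}{\|v\|_{q+1}}\right)^{\!p(q+1)/(q+1-p)}.\]
Since every $u\in\mathcal N_\delta$ is realised by $v=u$ (giving $\lambda^*_\delta(u)=1$), and $\inf_v K(v)=C_*^{-p(q+1)/(q+1-p)}$ by (\ref{2.55}), one obtains on the range where $h\geq 0$ the explicit formula $d(\delta)=h(\delta)\,C_*^{-p(q+1)/(q+1-p)}$, which matches Lemma~\ref{2.4} at $\delta=1$.

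Part~(iii) then reduces to analysing $h(\delta)$: a short derivative calculation factors $h'(\delta)$ as a positive multiple of $(1-\delta)$, so $d$ strictly increases on $(0,1]$, strictly decreases on $[1,(q+1)/p]$, and attains its maximum at $\delta=1$. For (ii), the same formula yields $d(\delta)>0$ on $(0,(q+1)/p)$ and $d((q+1)/p)=0$, so $b=(q+1)/p$ is the unique zero in $(1,(q+1)/p]$, with $b>1$ guaranteed by the standing hypothesis $q>p-1$. The main technical obstacle is the projective parameterisation step---one must check that the infimum defining $d(\delta)$ really factorises through $\inf_v K(v)$, with the Sobolev embedding constant appearing exactly as in (\ref{2.55})---after which (ii) and (iii) become elementary single-variable calculus on $h$.
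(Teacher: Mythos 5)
Your proposal is correct, and for parts (ii) and (iii) it takes a genuinely different and in fact sharper route than the paper. Part (i) coincides with the paper's argument: both substitute $\|u\|_{q+1}^{q+1}=\delta\|\nabla u\|_p^p$ into $J$ and invoke the lower bound $\|\nabla u\|_p\geq r(\delta)$ on $\mathcal N_\delta$ (with the tacit requirement that the coefficient $\tfrac{q+1-p\delta}{p(q+1)}$ be nonnegative, i.e.\ $\delta\leq\tfrac{q+1}{p}$, so that the inequality goes the right way). For (ii) the paper only shows $d\bigl(\tfrac{q+1}{p}\bigr)\leq0$ and $d(1)=d>0$ and appeals to continuity to locate some $b\in(1,\tfrac{q+1}{p}]$; for (iii) it compares $J(\lambda(\delta'')u)$ with $J(\lambda(\delta')u)$ for each fixed $u$ via the integral identity $\int_{\lambda(\delta')}^{\lambda(\delta'')}\lambda^{p-1}(1-\delta)\|\nabla u\|_p^p\,\mathrm d\lambda$ and then passes to the infimum over the Nehari manifolds. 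Your factorisation $J\bigl(\lambda^*_\delta(v)v\bigr)=h(\delta)K(v)$ with the scale-invariant quotient $K$ replaces both steps by the closed formula $d(\delta)=h(\delta)\,C_*^{-p(q+1)/(q+1-p)}$ on $0<\delta\leq\tfrac{q+1}{p}$; the factorisation of the infimum is legitimate because $K$ is $0$-homogeneous, every $u\in\mathcal N_\delta$ satisfies $\lambda^*_\delta(u)=1$, every nonzero $v\in W_N^{1,p}(\Omega)$ rescales into $\mathcal N_\delta$ (note that $\|\nabla v\|_p=0$ forces $v=0$ for mean-zero functions), and $h(\delta)\geq0$ on the stated range so the constant can be pulled out of the infimum. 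Your derivative computation is right: $h'(\delta)=\tfrac{\delta^{p/(q+1-p)-1}}{q+1-p}(1-\delta)$, so (iii) follows, and the formula additionally pins down $b=\tfrac{q+1}{p}$ exactly and gives $d>0$ on all of $\bigl(0,\tfrac{q+1}{p}\bigr)$ --- strictly more than the paper establishes (consistent with it, and matching Lemma 2.4 at $\delta=1$), at essentially no extra cost. The one step worth writing out carefully is the surjectivity-up-to-scaling of $v\mapsto\lambda^*_\delta(v)v$ onto $\mathcal N_\delta$, which you already flag; the rest is the elementary single-variable calculus you describe.
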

\begin{proof}
(i) If $u\in N_\delta$, that is, $I_\delta\left(u\right)=0$ and ${\left\|\nabla u\right\|}_p\neq0$. By Lemma \ref{2.2} (iii), we find
\begin{equation*}
\begin{array}{l}J\left(u\right)=\frac1p\left(1-\delta\right)\left\|\nabla u\right\|_p^p+\frac\delta p\left\|\nabla u\right\|_p^p-\frac1{q+1}\left\|u\right\|_{q+1}^{q+1}\\\;\;\;\;\;\;\;\;\;\geq\frac1p\left(1-\delta\right)r^p\left(\delta\right)+\frac{q+1-p}{p\left(q+1\right)}\delta r^p\left(\delta\right),\end{array}
\end{equation*}
which implies $d(1)\geq\frac{q+1-p}{p\left(q+1\right)}\left(\frac1{C_\ast^{q+1}}\right)^\frac p{q+1-p}$.

(ii) Let $\lambda\left(\delta\right)=\left(\frac{\delta\left\|\nabla u\right\|_p^p}{\left\|u\right\|_{q+1}^{q+1}}\right)^\frac1{q+1-p}$, then
\begin{equation*}
  \begin{array}{l}
I_\delta\left(\lambda\left(\delta\right)u\right)=\delta\lambda\left(\delta\right)^p\left\|\nabla u\right\|_p^p-\lambda\left(\delta\right)^{q+1}\left\|u\right\|_{q+1}^{q+1}\\\;\;\;\;\;\;\;\;\;\;\;\;\;\;\;\;\;=\lambda\left(\delta\right)^p\left(\delta\left\|\nabla u\right\|_p^p-\lambda\left(\delta\right)^{q+1-p}\left\|u\right\|_{q+1}^{q+1}\right)\\\;\;\;\;\;\;\;\;\;\;\;\;\;\;\;\;\;=0,
  \end{array}
\end{equation*}
i.e., $I_\delta\left(\lambda\left(\delta\right)u\right)=0$. For $\lambda\left(\delta\right)u\in N_\delta$, we have
\begin{equation*}
  \begin{array}{l}
d\left(\delta\right)\leq J\left(\lambda\left(\delta\right)u\right)\leq\frac1p\lambda\left(\delta\right)^p\left\|\nabla u\right\|_p^p-\frac1{q+1}\lambda\left(\delta\right)^{q+1}\left\|u\right\|_{q+1}^{q+1}\\\;\;\;\;\;\;\;\;\;\;\;\;\;\;\;\;\;\;\;\;\;\;\;\;\;\;\;\;=\left(\frac1p-\frac\delta{q+1}\right)\lambda\left(\delta\right)^{p(x)}\left\|\nabla u\right\|_p^p.
    \end{array}
\end{equation*}
Then $d\left(\delta\right)\leq0$ for $\delta=\frac{q+1}{p}$. On the other hand, $d\left(1\right)=d>0$. Since $d\left(\delta\right)$ is continuous with respect to $\delta$, there is a $b\in(1,\frac{q+1}{p}\rbrack$ satisfying $d\left(b\right)=0$.

(iii) If we can prove that $\underset{N_{\delta'}}{\inf}J\left(u\right)=d\left(\delta'\right)<d\left(\delta''\right)=\underset{N_{\delta''}}{\inf}J\left(u\right)$ for $0<\delta'<\delta''<1$ and $1<\delta''<\delta'<b$, respectively, then the lemma is proved.

In fact, for $0<\delta'<\delta''<1$, from the definition of $\lambda\left(\delta\right)$, we see that
\begin{equation}\label{2.99}
\begin{array}{l}J\left(\lambda\left(\delta''\right)u\right)-J\left(\lambda\left(\delta'\right)u\right)\\=\frac1p\int_\Omega(\lambda\left(\delta''\right)^p\left|\nabla u\right|^p-\lambda\left(\delta'\right)^p\left|\nabla u\right|^p)\operatorname dx\\\;\;\;\;-\frac1{q+1}\int_\Omega(\lambda\left(\delta''\right)^{q+1}\left|u\right|^{q+1}-\lambda\left(\delta'\right)^{q+1}\left|u\right|^{q+1})\operatorname dx\\=\int_\Omega\left|\nabla u\right|^p\int_{\lambda\left(\delta'\right)}^{\lambda\left(\delta''\right)}\lambda\left(\delta\right)^{p-1}\operatorname d\lambda\operatorname dx-\int_\Omega\left|u\right|^{q+1}\int_{\lambda\left(\delta'\right)}^{\lambda\left(\delta''\right)}\lambda\left(\delta\right)^q\operatorname d\lambda\operatorname dx\\=\int_{\lambda\left(\delta'\right)}^{\lambda\left(\delta''\right)}\left(\lambda\left(\delta\right)^{p-1}\left\|\nabla u\right\|_p^p-\lambda\left(\delta\right)^q\left\|u\right\|_{q+1}^{q+1}\right)\operatorname d\lambda\\=\int_{\lambda\left(\delta'\right)}^{\lambda\left(\delta''\right)}\lambda\left(\delta\right)^{p-1}\left(1-\delta\right)\left\|\nabla u\right\|_p^p\operatorname d\lambda.\end{array}
\end{equation}

By the definition of $\lambda\left(\delta\right)$, we know that $\lambda\left(\delta\right)$ is increasing with respect to $\delta$. Therefore, from (\ref{2.99}), we have $J\left(\lambda\left(\delta''\right)u\right)-J\left(\lambda\left(\delta'\right)u\right)>0$.

Similarly, for $1<\delta''<\delta'<b$, we get $J\left(\lambda\left(\delta''\right)u\right)-J\left(\lambda\left(\delta'\right)u\right)>0$.
\end{proof}

From Lemma \ref{2.5}, we can define $d_0=\underset{\delta\rightarrow0^+}{\mathrm{lim}}d(\delta)\geq0$.

\begin{lemma}\label{2.6}
For $u\in W_N^{1,p}$, let $d_0<J(u)<d$, and $\delta_1<1<\delta_2$ be the two roots of equation $d(\delta)=J(u)$, then the sign of $I_\delta(u)$ is invariable in $\delta_1<\delta<\delta_2$.
\end{lemma}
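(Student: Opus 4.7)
The plan is a proof by contradiction that exploits the affine dependence of $I_\delta(u)$ on $\delta$ together with the strict unimodality of $d(\delta)$ established in Lemma \ref{2.5}(iii).

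First I would rule out the trivial case $u=0$. Since $J(u)>d_0\geq 0$ while $J(0)=0$, we must have $u\not\equiv 0$. Because $u\in W_N^{1,p}$ satisfies $\int_\Omega u\,dx=0$, the Poincar\'e inequality gives $\|u\|_{q+1}\leq C\|\nabla u\|_p$, so $u\not\equiv 0$ forces both $\|\nabla u\|_p\neq 0$ and $\|u\|_{q+1}\neq 0$.

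Next, I would observe that $I_\delta(u)=\delta\|\nabla u\|_p^p-\|u\|_{q+1}^{q+1}$ is an affine (in fact linear-plus-constant) function of $\delta$ with nonzero slope, hence it has exactly one zero, namely
\begin{equation*}
\bar\delta=\frac{\|u\|_{q+1}^{q+1}}{\|\nabla u\|_p^p}.
\end{equation*}
Therefore $I_\delta(u)$ fails to have constant sign on $(\delta_1,\delta_2)$ only if $\bar\delta\in(\delta_1,\delta_2)$. Suppose, toward a contradiction, that this is the case. Then $I_{\bar\delta}(u)=0$ and $\|\nabla u\|_p\neq 0$, so $u\in\mathcal N_{\bar\delta}$, which by the definition $d(\bar\delta)=\inf_{u\in\mathcal N_{\bar\delta}}J(u)$ yields
\begin{equation*}
J(u)\geq d(\bar\delta).
\end{equation*}

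On the other hand, by Lemma \ref{2.5}(iii), $d(\delta)$ is strictly increasing on $(0,1]$ and strictly decreasing on $[1,b]$, with maximum at $\delta=1$. Combined with the fact that $\delta_1<1<\delta_2$ are defined by $d(\delta_1)=d(\delta_2)=J(u)$, this strict unimodality implies
\begin{equation*}
d(\delta)>J(u)\quad\text{for every }\delta\in(\delta_1,\delta_2).
\end{equation*}
Applied at $\delta=\bar\delta$, this contradicts the inequality $J(u)\geq d(\bar\delta)$ obtained above. Hence $\bar\delta\notin(\delta_1,\delta_2)$, and since $I_\delta(u)$ is continuous in $\delta$ with its unique root lying outside the open interval, its sign is invariable on $(\delta_1,\delta_2)$. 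The only delicate point is that the monotonicity of $d$ stated in Lemma \ref{2.5}(iii) must be \emph{strict}, which is why the inequality $d(\bar\delta)>J(u)$ (rather than $\geq$) is needed; fortunately the computation in (\ref{2.99}) gives this strict inequality, so no additional work is required.
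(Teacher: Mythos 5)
Your proof is correct. Note that the paper itself states Lemma \ref{2.6} without any proof, so there is no in-paper argument to compare against; your write-up in effect supplies the missing justification. The structure you use is the standard one in the potential-well literature: $I_\delta(u)=\delta\Vert\nabla u\Vert_p^p-\Vert u\Vert_{q+1}^{q+1}$ is affine and strictly increasing in $\delta$ (since $\Vert\nabla u\Vert_p\neq 0$, which you correctly extract from $J(u)>d_0\geq 0$ and the zero-mean Poincar\'e inequality), so a sign change inside $(\delta_1,\delta_2)$ forces the unique root $\bar\delta$ to lie there, whence $u\in\mathcal N_{\bar\delta}$ and $J(u)\geq d(\bar\delta)$, contradicting $d(\delta)>J(u)$ on $(\delta_1,\delta_2)$. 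The one point worth flagging, which you already do, is that the contradiction genuinely requires the \emph{strict} unimodality of $d(\delta)$ asserted in Lemma \ref{2.5}(iii); strictly speaking, the computation (\ref{2.99}) gives $J(\lambda(\delta')v)<J(v)$ for each fixed $v\in\mathcal N_{\delta''}$, and passing from this pointwise strict inequality to strictness of the infima $d(\delta')<d(\delta'')$ needs a uniform gap (obtainable from the lower bound $\Vert\nabla v\Vert_p\geq r(\delta'')$), a subtlety the paper itself glosses over. Granting Lemma \ref{2.5}(iii) as stated — which the hypothesis ``the two roots of $d(\delta)=J(u)$'' already presupposes — your argument is complete.
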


Next, we prove that $W_\delta$ and $V_\delta$ are invariant sets of (\ref{1.1}) when $0<J(u_0)<d$. The following discussion is divided into two parts: $J(u_0)$ being in the monotonic interval of $d\left(\delta\right)$ and $J(u_0)$ being in the non-monotonic interval of $d\left(\delta\right)$.

\begin{proposition}[Inspired by \cite{24}]\label{2.1}
 Assume that $u$ is a weak solution of (\ref{1.1}), and the initial value satisfies $u_0\in W_N^{1,p}$ and $J\left(u_0\right)=\sigma$. Then we get the following results.

 \emph{(i)} If $0<\sigma\leq d_0$, then there is a unique $\overline\delta\in\left(1,b\right)$ satisfying $d\left(\overline\delta\right)=\sigma$, where $b$ is the constant in Lemma \ref{2.5}  (ii). Further, if $I\left(u_0\right)>0$, then $u\in W_\delta$ for any $1\leq\delta<\overline\delta$. Otherwise, if $I\left(u_0\right)<0$, then there is $u\in V_\delta$ for any $1\leq\delta<\overline\delta$;

 \emph{(ii)} If $d_0<\sigma<d$, then $\delta_1$ and $\delta_2$ satisfy $\delta_1<1<\delta_2$ and $d\left(\delta_1\right)=d\left(\delta_2\right)=\sigma$. Further, if $I\left(u_0\right)>0$, then there is $u\in W_\delta$ for any  $\delta_1<\delta<\delta_2$. Otherwise, if $I\left(u_0\right)<0$, then $u\in V_\delta$ for any $\delta_1<\delta<\delta_2$.
\end{proposition}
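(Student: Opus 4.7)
The plan is to split the proof into two stages: first, produce the thresholds $\overline{\delta}$ or $(\delta_1,\delta_2)$ from the shape of $d(\cdot)$; second, establish the invariance of $W_\delta$ and $V_\delta$ along the evolution via a first-crossing argument played against the infimum characterization $d(\delta)=\inf_{\mathcal{N}_\delta}J$.

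Existence and uniqueness of the thresholds follow directly from Lemma 2.5 (iii): $d$ is continuous, strictly increasing from $d_0$ to $d$ on $(0,1]$, and strictly decreasing from $d$ to $0$ on $[1,b]$. In case (i) the inequality $\sigma\le d_0\le d(\delta)$ on $(0,1]$ forces the unique root of $d(\delta)=\sigma$ to lie in $(1,b)$ by the intermediate value theorem, producing $\overline\delta$; in case (ii) each monotonic branch contributes exactly one root, giving $\delta_1\in(0,1)$ and $\delta_2\in(1,b)$. Monotonicity also yields $d(\delta)>\sigma=J(u_0)$ for every $\delta$ in the stated range, and the energy identity of Definition 2.1 then gives $J(u(t))\le J(u_0)<d(\delta)$ throughout $[0,T)$.

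To pin down the sign of $I_\delta(u_0)$, set $\delta^\ast:=\|u_0\|_{q+1}^{q+1}/\|\nabla u_0\|_p^p$, so that the affine map $\delta\mapsto I_\delta(u_0)=\delta\|\nabla u_0\|_p^p-\|u_0\|_{q+1}^{q+1}$ vanishes exactly at $\delta^\ast$ and changes sign there. Since $u_0\in\mathcal{N}_{\delta^\ast}$, we have $J(u_0)\ge d(\delta^\ast)$, i.e., $d(\delta^\ast)\le\sigma$. If $I(u_0)>0$ then $\delta^\ast<1$, and comparing with $d(\delta_1)=\sigma$ on the increasing branch (case (ii)), or using $d(\delta)\ge d_0\ge\sigma\ge d(\delta^\ast)$ on $(0,1]$ (case (i)), forces $\delta^\ast$ to lie to the left of the admissible $\delta$-range, so $I_\delta(u_0)>0$ throughout; the symmetric computation on the decreasing branch handles $I(u_0)<0$, using $\overline\delta\le\delta^\ast$ or $\delta_2\le\delta^\ast$. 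In case (ii) Lemma 2.6 provides an alternative route.

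For the invariance along the flow, fix $\delta$ in the specified range, assume $I(u_0)>0$ (so $I_\delta(u_0)>0$), and argue by contradiction: let $t_0\in(0,T)$ be the first time $I_\delta(u(t_0))=0$. If $\|\nabla u(t_0)\|_p>0$ then $u(t_0)\in\mathcal{N}_\delta$ and $J(u(t_0))\ge d(\delta)>\sigma\ge J(u(t_0))$, a contradiction; if $\|\nabla u(t_0)\|_p=0$ then the conservation law $\int_\Omega u(t_0)\,dx=0$ forces $u(t_0)\equiv 0\in W_\delta$, so membership in $W_\delta$ persists. The $V_\delta$ case is analogous: Lemma 2.2 (ii) guarantees $\|\nabla u(t)\|_p>r(\delta)>0$ on $[0,t_0)$, which by continuity excludes the degenerate alternative and produces the same contradiction. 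The main obstacle I anticipate is justifying the time continuity of $I_\delta(u(\cdot))$ needed for the first-crossing argument to produce a genuine zero; the regularity $u\in L^\infty(0,T;W_N^{1,p})$ with $u_t\in L^2(0,T;W_N^{1,2})$ together with $W^{1,p}\hookrightarrow L^{q+1}$ yields $u\in C([0,T);L^{q+1}(\Omega))$ and weak-$\ast$ continuity into $W^{1,p}$, which is adequate but must be invoked with care.
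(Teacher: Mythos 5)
Your proposal is correct and follows the same overall strategy as the paper: locate the thresholds from the monotonicity of $d(\cdot)$ in Lemma 2.5, fix the sign of $I_\delta(u_0)$ at $t=0$, and then run a first-crossing contradiction against the infimum characterization $d(\delta)=\inf_{\mathcal N_\delta}J$ using the energy identity $\int_0^t\left\|u_\tau\right\|_{H^1}^2\operatorname d\tau+J(u)=J(u_0)<d(\delta)$. The one place you genuinely diverge is the initial-time step: the paper handles $I(u_0)>0$ by the one-line estimate $I_\delta(u_0)=(\delta-1)\left\|\nabla u_0\right\|_p^p+I(u_0)$ for $\delta\geq1$, handles $I(u_0)<0$ by a ``first $\delta_\ast$ with $u_0\in\partial V_{\delta_\ast}$'' continuity argument in the parameter $\delta$, and in case (ii) simply invokes Lemma 2.6; you instead observe that $\delta\mapsto I_\delta(u_0)$ is affine with unique zero $\delta^\ast=\left\|u_0\right\|_{q+1}^{q+1}/\left\|\nabla u_0\right\|_p^p$, that $u_0\in\mathcal N_{\delta^\ast}$ forces $d(\delta^\ast)\leq J(u_0)$, and that the monotonicity of $d$ then places $\delta^\ast$ outside the admissible $\delta$-interval on the correct side. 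Your version is cleaner, treats both signs and both cases uniformly, and in effect supplies the proof of Lemma 2.6 that the paper states without argument; the paper's version is equivalent in substance but more piecemeal. Both writeups share the technical point you rightly flag and the paper passes over in silence, namely the time-continuity of $t\mapsto I_\delta(u(t))$ (only weak-$\ast$ continuity into $W^{1,p}$ plus strong continuity into $L^{q+1}$ is available) needed for the first-crossing time to exist; your handling of the degenerate alternative $\left\|\nabla u(t_0)\right\|_p=0$ via the mean-zero constraint and the inclusion $\{0\}\subset W_\delta$ is also slightly more careful than the paper's.
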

\begin{proof}
Case 1. If $0<J\left(u_0\right)=\sigma\leq d_0$, that is, $J\left(u_0\right)$ is in the monotonic interval of $d\left(\delta\right)$. By Lemma \ref{2.5}, there is a unique $\overline\delta\in\left(1,b\right)$ satisfying $d\left(\overline\delta\right)=\sigma$. For any $1\leq\delta<\overline\delta$, we have
\begin{equation}\label{2.15}
I_\delta\left(u_0\right)=\left(\delta-1\right)\left\|\nabla u_0\right\|_p^p+I\left(u_0\right)\geq I\left(u_0\right),\;J\left(u_0\right)=\sigma=d\left(\overline\delta\right)<d\left(\delta\right).
\end{equation}

Multiplying the equation of (\ref{1.1}) by $u_t$, and
integrating on $\Omega\times\left[0,t\right]$, we obtain that
\begin{equation}\label{2.16}
\int_0^t\left(\left\|u_\tau\right\|_2^2+\left\|\nabla u_\tau\right\|_2^2\right)\operatorname d\tau+J\left(u\right)=J\left(u_0\right)=d\left(\overline\delta\right)<d\left(\delta\right),
\end{equation}
for any $t\in\left(0,T\right)$ and any $\delta\in\lbrack1,\overline\delta)$, where $T$ is the maximal existence time.

If $I\left(u_0\right)>0$, it is easy to know from (\ref{2.15}) that there is $u_0\in W_\delta$ for $\delta\in\lbrack1,\overline\delta)$. We assert that if $t\in\left(0,T\right)$ and $\delta\in\lbrack1,\overline\delta)$, then $u\in W_\delta$. Assuming that the assertion is not tenable, then there exists $\delta^\ast\in\lbrack1,\overline\delta)$ and $t_0\in\left(0,T\right)$, such that $u\in W_{\delta^\ast}$ for $t\in\left(0,t_0\right)$, but $u\left(x,t_0\right)\in\partial W_{\delta^\ast}$, i.e.,
\begin{equation*}
I_{\delta^\ast}\left(u\left(t_0\right)\right)=0,\;\left\|\nabla u\left(t_0\right)\right\|_p^p\neq0,\;\mathrm{or}\;J\left(u\left(t_0\right)\right)=d\left(\delta^\ast\right).
\end{equation*}

In fact, from (\ref{2.16}), we can see that $J\left(u\left(t_0\right)\right)\leq J\left(u_0\right)<d\left(\delta^\ast\right)$,  which implies
$I_{\delta^\ast}\left(u\left(t_0\right)\right)=0$ and $\left\|\nabla u\left(t_0\right)\right\|_p^p\neq0$, i.e. $u\left(x,t_0\right)\in N_{\delta^\ast}$. Therefore, by the definition of $d\left(\delta^\ast\right)$, we can get $J\left(u\left(t_0\right)\right)\geq d\left(\delta^\ast\right)$, which is a contradiction.

Next, we prove that if $I\left(u_0\right)<0$, then $u_0\in V_\delta$ for $\delta\in\lbrack1,\overline\delta)$, and $u\in V_\delta$ for any $t\in\left(0,T\right)$ and any $\delta\in\lbrack1,\overline\delta)$. Assuming that the assertion about $u_0$ is not tenable, combined with (\ref{2.15}),  we can know that $\delta_\ast\in\lbrack1,\overline\delta)$ being the first number such that $u_0\in V_\delta$ for $\delta\in\lbrack1,\delta_\ast)$ and $u_0\in\partial V_{\delta_\ast}$, i.e.,
\begin{equation*}
I_{\delta_\ast}\left(u_0\right)=0,\;or\;J\left(u_0\right)=d\left(\delta_\ast\right).
\end{equation*}

Because $J(u_0)$ is in the strict decreasing interval of $d(\delta)$, then $J(u_0)=d(\overline\delta)<d(\delta_\ast)$, which implies that $I_{\delta_\ast}(u_0)=0$. Because $I_\delta(u_0)<0$ for $\delta\in\lbrack1,\delta_\ast)$,  then from Lemma \ref{2.2} ($\mathrm{ii}$) we have ${\left\|\nabla u_0\right\|}_p>r(\delta)>0$, that is, $u_0\in N_{\delta_\ast}$. According to the definition of $d\left(\delta_\ast\right)$, we get $J(u_0)=d(\overline\delta)\geq d(\delta_\ast)$, which is contradict with the monotonicity of $d(\delta)$. Assuming that the assertion about $u$ is not tenable,  then for $t\in(0,t_0)$, there exists $\delta_\ast^\ast\in\lbrack1,\overline\delta)$ and $t_0\in(0,T)$, such that $u\in V_{\delta_\ast^\ast}$, but $u(x,t_0)\in\partial V_{\delta_\ast^\ast}$, that is,
\begin{equation*}
I_{\delta_\ast^\ast}(u(t_0))=0,\;\mathrm{or}\;J(u(t_0))=d(\delta_\ast^\ast).
\end{equation*}

In fact, $J(u(t_0))\leq J(u_0)<d(\delta_\ast^\ast)$ is obtained from (\ref{2.16}),  which implies $I_{\delta_\ast^\ast}(u(t_0))=0$. Because $I_\delta(u(t_0))<0$ for $\delta\in\lbrack1,\delta_\ast^\ast)$,  then from Lemma \ref{2.2} ($\mathrm{ii}$) we get ${\left\|\nabla u(t_0)\right\|}_p>r(\delta)>0$, that is, $u(x,t_0)\in N_{\delta_\ast^\ast}$. From the definition of $d(\delta_\ast^\ast)$, we get $J(u(t_0))\geq d(\delta_\ast^\ast)$, which is a contradiction.

Case 2. $d_0<J(u_0)=\sigma<d$, that is, $J(u_0)$ is in the non-monotonic interval of $d(\delta)$. By Lemma \ref{2.5}, it is known that there exist $\delta_1<1<\delta_2$ being two roots of $d(\delta)=\sigma$, and $d_0<J(u_0)=d(\delta_1)=d(\delta_2)<d(\delta)$ for $\delta\in(\delta_1,\delta_2)$. If $I(u_0)>0$, then from Lemma \ref{2.6},  the sign of $I_\delta(u)$ remains unchanged for $\delta_1<\delta<\delta_2$.  Thus $I_\delta(u_0)>0$ for $\delta\in(\delta_1,\delta_2)$. Therefore, we have $u_0\in W_\delta$ for $\delta\in(\delta_1,\delta_2)$. The proof of $u\in W_\delta$ is similar to Case 1. If $I(u_0)<0$, by lemma \ref{2.6}, we can still get $I_\delta(u_0)<0$ for $\delta\in(\delta_1,\delta_2)$ and $J(u_0)<d(\delta)$ for $\delta\in(\delta_1,\delta_2)$,  imply that $u_0\in V_\delta$ for $\delta\in(\delta_1,\delta_2)$. The proof of $u\in V_\delta$ is similar to Case 1.
\end{proof}

By Lagrange mean value theorem, the following lemma can be obtained directly.

\begin{lemma}[Estimate of nonlinear term $\left|u\right|^{q-1}u$]\label{2.7}
If $q$ satisfies the conditions given by (\ref{1.1}), $\left|u_1\right|+\left|u_2\right|>0$ and $u_1\neq u_2$ for any $u_1\left(x,t\right)$, $u_2\left(x,t\right)$ with $\left(x,t\right)\in\Omega\times\left[0,T\right]$, then
\begin{equation*}
\left|u_1\right|^{q-1}u_1-\left|u_2\right|^{q-1}u_2\leq q\left(\left|u_1\right|+\left|u_2\right|\right)^{q-1}\left|u_1-u_2\right|.
\end{equation*}
\end{lemma}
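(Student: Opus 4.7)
The plan is to reduce this inequality to a direct application of the Lagrange mean value theorem applied pointwise in $(x,t)$. I would first introduce the auxiliary real-valued function $f:\mathbb{R}\to\mathbb{R}$ given by $f(s)=|s|^{q-1}s$, and observe that since the hypotheses on (\ref{1.1}) enforce $q>\max\{p-1,1\}\ge1$, the function $f$ is continuously differentiable on all of $\mathbb{R}$ with $f'(s)=q|s|^{q-1}$ (extended by $f'(0)=0$ when $q>1$). This $C^1$ regularity at the origin is the only smoothness issue worth checking, and it is immediate from $q-1>0$.

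Next, for each fixed point $(x,t)\in\Omega\times[0,T]$ at which $u_1(x,t)\neq u_2(x,t)$, I would apply the classical mean value theorem to $f$ on the closed interval whose endpoints are $u_1(x,t)$ and $u_2(x,t)$. This furnishes some $\xi=\xi(x,t)$ lying strictly between $u_1$ and $u_2$ with
$$|u_1|^{q-1}u_1-|u_2|^{q-1}u_2 = f(u_1)-f(u_2) = f'(\xi)(u_1-u_2) = q|\xi|^{q-1}(u_1-u_2).$$

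Finally, since $\xi$ lies between $u_1$ and $u_2$, we have $|\xi|\le\max\{|u_1|,|u_2|\}\le|u_1|+|u_2|$, and $q-1\ge0$ gives the monotonicity bound $|\xi|^{q-1}\le(|u_1|+|u_2|)^{q-1}$. Inserting this into the identity above and taking absolute values yields
$$\left||u_1|^{q-1}u_1-|u_2|^{q-1}u_2\right| \le q\bigl(|u_1|+|u_2|\bigr)^{q-1}|u_1-u_2|,$$
which in particular implies the stated (signed) inequality. The assumption $|u_1|+|u_2|>0$ together with $u_1\neq u_2$ merely rules out the trivial case $u_1\equiv u_2\equiv0$ where the estimate is vacuous. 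There is no genuine obstacle here; everything follows from one-variable calculus once the correct auxiliary function has been singled out, which is exactly why the authors announce the lemma as a direct consequence of the mean value theorem.
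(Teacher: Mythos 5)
Your proof is correct and follows exactly the route the paper intends: the paper offers no written proof beyond the remark that the lemma ``can be obtained directly'' from the Lagrange mean value theorem, and your argument (apply the mean value theorem to $f(s)=|s|^{q-1}s$ with $f'(s)=q|s|^{q-1}$, then bound $|\xi|^{q-1}\le(|u_1|+|u_2|)^{q-1}$ using $q>1$) is precisely the standard way to carry that out. The only detail worth keeping is your check that $f$ is $C^1$ at the origin because $q>1$, which justifies applying the mean value theorem on intervals containing $0$.
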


The main results of global existence, uniqueness, decay estimate and blowup for the case $J\left(u_0\right)<d$ are stated in the following two theorems.
\begin{theorem}\label{theorem1}
 Assume that $u_0\in W_N^{1,p}\left(\Omega\right)$, $J\left(u_0\right)<d$  and $I\left(u_0\right)>0$, then problem (\ref{1.1}) has a global weak solution $u\in L^\infty\left(0,\infty;W_N^{1,p}\left(\Omega\right)\right)$ with $u_t\in L^2\left(0,\infty;W_N^{1,2}\left(\Omega\right)\right)$, and the weak solution of problem (\ref{1.1}) is unique for $p\leq2$.

 Moreover, if $\left\|\nabla u\right\|_2^2\leq C\left\|\nabla u\right\|_p^p$, there exists a constant $\delta>0$ such that $\left\|u\right\|_{H^1}^2<\left\|u_0\right\|_{H^1}^2e^{-2\delta t}$ for $t\in\lbrack0,\infty)$.
\end{theorem}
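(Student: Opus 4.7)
I would construct the global weak solution via a Faedo--Galerkin scheme. Let $\{w_j\}_{j\ge1}$ be an $L^2$-orthonormal basis of $W_N^{1,p}(\Omega)\cap W_N^{1,2}(\Omega)$, for example the zero-mean Neumann eigenfunctions of $-\Delta$, and seek $u_m(x,t)=\sum_{j=1}^m g_{jm}(t)w_j(x)$ solving the projected ODE system. Testing this system with $u_{mt}$ yields the energy identity
\begin{equation*}
\int_0^t\bigl(\|u_{m\tau}\|_2^2+\|\nabla u_{m\tau}\|_2^2\bigr)d\tau+J(u_m(t))=J(u_m(0)).
\end{equation*}
Since $u_m(0)\to u_0$ in $W_N^{1,p}$, for $m$ large one has $J(u_m(0))<d$ and $I(u_m(0))>0$, so Proposition 2.1 keeps $u_m(t)\in W$ for all $t$, whence $I(u_m)>0$ and the identity $J=\tfrac{q+1-p}{p(q+1)}\|\nabla u\|_p^p+\tfrac{1}{q+1}I$ yields the uniform a priori bound $\|\nabla u_m\|_p^p<\tfrac{p(q+1)}{q+1-p}d$ together with $\int_0^\infty\|u_{m\tau}\|_{H^1}^2\,d\tau<d$. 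Consequently $u_m$ extends globally, is uniformly bounded in $L^\infty(0,\infty;W_N^{1,p})$, and has time derivative uniformly bounded in $L^2(0,\infty;W_N^{1,2})$.

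To pass to the limit I extract weakly-$*$ convergent subsequences and apply Aubin--Lions, using the strict subcriticality $q+1<np/(n-p)$, to obtain $u_m\to u$ strongly in $L^{q+1}(0,T;L^{q+1}(\Omega))$ and pointwise a.e.; this takes care of both $|u_m|^{q-1}u_m$ and its nonlocal average. The principal technical difficulty is identifying the weak limit of the quasi-linear flux $|\nabla u_m|^{p-2}\nabla u_m$, and I would handle it by the Minty--Browder monotonicity trick, testing against $u_m-\varphi$ for arbitrary $\varphi\in W_N^{1,p}(\Omega)$ and exploiting the monotonicity of $\xi\mapsto|\xi|^{p-2}\xi$ to identify this limit with $|\nabla u|^{p-2}\nabla u$. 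Lower semicontinuity of the relevant norms then produces the energy identity required in Definition 2.1.

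For uniqueness when $p\le 2$, let $u,v$ be two weak solutions with the same initial datum and set $w=u-v$; since $\int_\Omega w\,dx=0$, the nonlocal terms pair trivially with $w$. Testing the difference equation against $w$ and using monotonicity of the $p$-Laplacian gives
\begin{equation*}
\tfrac{1}{2}\tfrac{d}{dt}\|w\|_{H^1}^2\le\int_\Omega\bigl(|u|^{q-1}u-|v|^{q-1}v\bigr)w\,dx,
\end{equation*}
and by Lemma 2.7 plus H\"older with exponents $(q+1)/(q-1)$ and $(q+1)/2$ the right-hand side is bounded by $C(\|u\|_{q+1}+\|v\|_{q+1})^{q-1}\|w\|_{q+1}^2$, the first factor being uniformly controlled by the a priori bound. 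The hypothesis $p\le 2$ is used precisely here, since it gives $q+1<np/(n-p)\le 2n/(n-2)$, so that $H^1(\Omega)\hookrightarrow L^{q+1}(\Omega)$ and $\|w\|_{q+1}^2\le C\|w\|_{H^1}^2$; Gronwall then forces $w\equiv 0$.

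Finally, for the decay I test (\ref{1.1}) against $u$; the nonlocal term vanishes since $\int_\Omega u\,dx=0$, leaving $\tfrac{1}{2}\tfrac{d}{dt}\|u\|_{H^1}^2=-I(u)$. Combining $\|u\|_{q+1}^{q+1}\le C_*^{q+1}\|\nabla u\|_p^{q+1}$ with the a priori bound $\|\nabla u\|_p^p\le\tfrac{p(q+1)}{q+1-p}J(u_0)$ and the explicit value of $d$ from Lemma 2.4 gives $C_*^{q+1}\|\nabla u\|_p^{q+1-p}\le\beta_0<1$, hence $I(u)\ge(1-\beta_0)\|\nabla u\|_p^p$. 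The Poincar\'e--Wirtinger inequality (since $\int u=0$) together with the extra hypothesis $\|\nabla u\|_2^2\le C\|\nabla u\|_p^p$ yields $\|u\|_{H^1}^2\le C'\|\nabla u\|_p^p$, so $I(u)\ge 2\delta\|u\|_{H^1}^2$ for some $\delta>0$ and Gronwall produces the desired $\|u\|_{H^1}^2\le\|u_0\|_{H^1}^2 e^{-2\delta t}$.
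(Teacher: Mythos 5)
Your proposal is correct and follows essentially the same route as the paper: Galerkin approximation with the potential-well invariance giving the a priori bounds $\left\|\nabla u_m\right\|_p^p<\frac{p(q+1)}{q+1-p}d$ and $\int_0^\infty\left\|u_{m\tau}\right\|_{H^1}^2\,d\tau<d$, the Minty--Browder monotonicity argument to identify the limit of $\left|\nabla u_m\right|^{p-2}\nabla u_m$, a Gronwall argument for uniqueness using Lemma 2.7 and the embedding $H^1\hookrightarrow L^{q+1}$ (valid since $p\leq2$ forces $q+1<\frac{2n}{n-2}$), and the differential inequality $\frac12\frac{d}{dt}\left\|u\right\|_{H^1}^2=-I(u)\leq-\delta\left\|u\right\|_{H^1}^2$ for the decay. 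The only cosmetic differences are that you invoke Aubin--Lions explicitly for the strong convergence of the nonlinear term and use monotonicity of $\xi\mapsto\left|\xi\right|^{p-2}\xi$ directly in the uniqueness step, where the paper instead writes the flux difference via a mean-value identity as $(p-1)\int_\Omega\left|\nabla\overline{v}\right|^{p-2}\left|\nabla v\right|^2\,dx\geq0$.
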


\begin{theorem}\label{theorem2}
Let $u_0\in W_N^{1,p}\left(\Omega\right)$, $J\left(u_0\right)<d$ and $I\left(u_0\right)<0$, then the weak solution $u(t)$ of problem (\ref{1.1}) blows up in finite time.
\end{theorem}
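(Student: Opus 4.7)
The plan is the concavity (Levine--Sattinger) method, taking advantage of the strong dissipation $-\triangle u_t$, which makes the full $H^1$-norm the natural functional to track. First I would establish invariance of the unstable set: energy decay $J(u(t))\le J(u_0)<d$ combined with the definition of $d$ as the infimum of $J$ over $\mathcal N$ rules out $I(u(t))=0$ at any time, so $I(u(t))<0$ throughout the maximal existence interval $[0,T)$. This is exactly Proposition 2.1 when $J(u_0)>0$, and a direct monotonicity argument handles $J(u_0)\le 0$. Lemma 2.2(ii) then gives $\|\nabla u(t)\|_p>r(1)$, and combining with the explicit formula for $d$ from Lemma 2.4 produces the crucial coercive bound $\tfrac{q+1-p}{p}\|\nabla u(t)\|_p^p>(q+1)d>(q+1)J(u_0)$.

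Next, introduce the auxiliary functional $F(t):=\int_0^t\|u(\tau)\|_{H^1}^2\,d\tau+(T_0-t)\|u_0\|_{H^1}^2$ with $T_0>0$ to be fixed later. Testing the weak formulation with $\varphi=u$---the nonlocal term drops because $\int_\Omega u\,dx=0$---gives $(u_t,u)+(\nabla u_t,\nabla u)=-I(u)$, hence $F''(t)=-2I(u(t))$. Substituting the energy identity $J(u(t))=J(u_0)-\int_0^t\|u_\tau\|_{H^1}^2\,d\tau$ into $-2I=\tfrac{2(q+1-p)}{p}\|\nabla u\|_p^p-2(q+1)J(u)$ rewrites this as
\begin{equation*}
F''(t)=\tfrac{2(q+1-p)}{p}\|\nabla u\|_p^p-2(q+1)J(u_0)+2(q+1)\int_0^t\|u_\tau\|_{H^1}^2\,d\tau,
\end{equation*}
while Cauchy--Schwarz applied to $F'(t)=2\int_0^t[(u_\tau,u)+(\nabla u_\tau,\nabla u)]\,d\tau$ delivers $(F'(t))^2\le 4F(t)\int_0^t\|u_\tau\|_{H^1}^2\,d\tau$.

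The core step is the concavity inequality. Choosing $\alpha=(q-1)/2>0$---the unique value making the coefficient $2(q+1)-4(1+\alpha)$ of the dissipation integral vanish---yields
\begin{equation*}
F(t)F''(t)-(1+\alpha)(F'(t))^2\ge 2F(t)\left[\tfrac{q+1-p}{p}\|\nabla u\|_p^p-(q+1)J(u_0)\right]>0,
\end{equation*}
so $(F^{-\alpha})''\le 0$. Because $F''>0$ strictly, $F'(t_1)>0$ at any $t_1>0$, and the tangent line to $F^{-\alpha}$ at $t_1$ meets zero at $t^*\le t_1+F(t_1)/(\alpha F'(t_1))<\infty$; choosing $T_0$ above this estimate keeps $F$ well defined on $[0,t^*)$, so $F(t)\to\infty$ in finite time and the solution cannot exist beyond $t^*$.

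The main obstacle is this coefficient balance: the dissipation integral produced by Cauchy--Schwarz must exactly cancel the one generated by the energy identity inside $F''$, which rigidly forces $\alpha=(q-1)/2$---positive only thanks to the standing hypothesis $q>1$ from \eqref{1.1}. A secondary technical nuisance is that $F'(0)=0$, so one cannot invoke the classical Levine lemma verbatim at $t=0$; strict positivity of $F''$ upgrades $F'$ to be positive on $(0,T)$, and the argument then starts from any small $t_1>0$.
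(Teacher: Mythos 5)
Your proposal is correct and follows essentially the same route as the paper: invariance of the unstable set (Proposition 2.1, with the case $J(u_0)\le 0$ handled separately), the lower bound $\|\nabla u\|_p^p>\frac{p(q+1)}{q+1-p}d$ from Lemma 3.1, and the Levine concavity argument applied to the identical functional $G(t)=\int_0^t\|u\|_{H^1}^2\,\mathrm{d}\tau+(T_0-t)\|u_0\|_{H^1}^2$ with $G''=-2I(u)$. The only cosmetic difference is the concavity exponent: you take $1+\alpha=\frac{q+1}{2}$ to cancel the dissipation integral exactly, whereas the paper takes $1+\vartheta=\frac{q+3}{4}$ and simply keeps the leftover nonnegative term $(q-1)\int_0^t\|u_\tau\|_{H^1}^2\,\mathrm{d}\tau$; both choices close the argument.
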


The main results for the case $J\left(u_0\right)=d$ are given in the following two theorems.

\begin{theorem}\label{theorem3}
 Assume that $u_0\in W_N^{1,p}(\Omega)$ and $J\left(u_0\right)=d$. If $I(u_0)\geq0$, then problem (\ref{1.1}) admits a global weak solution  $u(t)\in L^\infty(0,\infty; W_N^{1,p}(\Omega))$ with $u_t\in L^2\left(0,\infty;W_N^{1,2}\left(\Omega\right)\right)$, and there exists a unique weak solution for $p\leq2$.

 Moreover, if $I(u_0)>0$ and $\left\|\nabla u\right\|_2^2\leq C\left\|\nabla u\right\|_p^p$, there exist constants $t_1>0$ and $\kappa>0$ such that $\left\|u\right\|_{H^1}^2<\left\|u\left(t_1\right)\right\|_{H^1}^2e^{-2\kappa\left(t-t_1\right)}$ for $t\in(t_1,+\infty)$.
\end{theorem}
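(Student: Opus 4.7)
The strategy is to reduce the critical energy case to the subcritical one of Theorem \ref{theorem1} by a scaling perturbation, and to obtain the exponential decay by showing that the solution enters the strictly subcritical region $J<d$ after a positive time. Concretely, for global existence I set $u_{0k}=\rho_k u_0$ with $\rho_k\uparrow 1^-$; the zero-mean condition is preserved, and since $I(u_0)\ge 0$ Lemma \ref{2.3} gives $\lambda^\ast(u_0)\ge 1$, so for $\rho_k<1$ we have $I(u_{0k})>0$ while the strict monotonicity of $J(\lambda u_0)$ on $[0,\lambda^\ast]$ yields $J(u_{0k})<J(u_0)=d$. Theorem \ref{theorem1} then furnishes global solutions $u_k$ obeying the energy identity, and the positivity of $I(u_k)$ combined with the rearrangement
\begin{equation*}
J(u_k)=\left(\frac{1}{p}-\frac{1}{q+1}\right)\|\nabla u_k\|_p^p+\frac{1}{q+1}I(u_k)
\end{equation*}
produces a uniform bound on $\|\nabla u_k\|_p$. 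Extracting weak-$\ast$ limits of $u_k$ in $L^\infty(0,\infty;W_N^{1,p}(\Omega))$ and of $u_{kt}$ in $L^2(0,\infty;W_N^{1,2}(\Omega))$, using Aubin--Lions for strong $L^{q+1}$ convergence on compact time intervals, and invoking Minty's monotonicity trick to identify the weak limit of $|\nabla u_k|^{p-2}\nabla u_k$, I pass to the limit in (\ref{2.331}) to obtain a global weak solution $u$.

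For uniqueness when $p\le 2$, let $w=u_1-u_2$ be the difference of two weak solutions. Testing the subtracted equation against $w$, the $p$-Laplacian contribution is nonnegative by monotonicity, the nonlocal mean term disappears because $\int_\Omega w\,dx=0$, and the reaction term is controlled through Lemma \ref{2.7}, H\"older's inequality, and the Sobolev embedding $W^{1,p}(\Omega)\hookrightarrow L^{q+1}(\Omega)$. This yields $\frac{d}{dt}\|w\|_{H^1}^2\le C(t)\|w\|_{H^1}^2$ with $C\in L^1_{\mathrm{loc}}$ from the uniform $W^{1,p}$ bounds on the $u_i$, and Gronwall closes the argument.

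For the exponential decay under $I(u_0)>0$, I first produce $t_1>0$ with $J(u(t_1))<d$ and $I(u(t_1))>0$. If the energy dissipation vanished on some interval $[0,t_1]$, then $u\equiv u_0$ would solve the stationary equation; testing against $u_0$ (the nonlocal term vanishing by zero-mean) would give $I(u_0)=0$, contradicting the hypothesis. Hence there is a first $t_1>0$ with $J(u(t_1))<d$; moreover $I(u(t))$ cannot reach zero on $[0,t_1]$, since otherwise $u(t)\in\mathcal N$ and $J(u(t))\ge d$ would force $u\equiv u_0$ via the energy identity, again contradicting $I(u_0)>0$. Restarting at $t_1$ with initial datum $u(t_1)\in W$ and $J(u(t_1))<d$, Theorem \ref{theorem1} transfers the exponential decay to all $t\ge t_1$. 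The hardest step is the passage to the limit in the $p$-Laplacian term, since only weak convergence of $\nabla u_k$ is at hand and the Minty argument has to be combined carefully with the energy identity; a related subtlety is the time-continuity of $I(u(\cdot))$ needed to pin down $I(u(t_1))>0$ in the critical case, which should follow from the regularity $u_t\in L^2(0,T;W_N^{1,2})$ together with the uniform $W^{1,p}$ bound.
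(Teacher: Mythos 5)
Your proposal is correct and follows essentially the same route as the paper: approximating the critical datum by $\rho_k u_0$ with $\rho_k\uparrow 1^-$ so that Lemma \ref{2.3} gives $I(\rho_k u_0)>0$ and $J(\rho_k u_0)<d$, invoking Theorem \ref{theorem1} for the approximating solutions, passing to the limit via the uniform bounds and the monotonicity (Minty) argument, and, for the decay, showing by the energy identity that $u$ stays in $W$ and enters the strictly subcritical region $J<d$ at some $t_1>0$, after which the subcritical decay estimate applies. The only cosmetic difference is that you spell out the compactness and Minty steps that the paper delegates to ``similar to Theorem \ref{theorem1}.''
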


\begin{theorem}\label{theorem4}
Let $u_0\in W_N^{1,p}(\Omega)$, $J(u_0)=d$ and $I(u_0)<0$, then the weak solution $u(t)$ of problem (\ref{1.1}) blows up in finite time.
\end{theorem}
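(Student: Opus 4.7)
The plan is to reduce the critical-energy blowup to the subcritical blowup of Theorem \ref{theorem2} by showing that the solution leaves the energy level $d$ after an arbitrarily small positive time while still satisfying $I(u)<0$. More precisely, I would produce a time $t_0>0$ such that
\begin{equation*}
J(u(t_0))<d,\qquad I(u(t_0))<0,
\end{equation*}
and then apply Theorem \ref{theorem2} to the shifted problem with initial datum $u(t_0)$ (note $u(t_0)\in W_N^{1,p}(\Omega)$ since the conservation law preserves the zero-mean condition).

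First, by the continuity of $I(u(t))$ in $t$ (coming from $u\in L^\infty(0,T;W_N^{1,p})$ together with $u_t\in L^2(0,T;W_N^{1,2})$ and the compact Sobolev embedding, which let us pass continuity through $\|\nabla u\|_p^p$ and $\|u\|_{q+1}^{q+1}$), the strict inequality $I(u_0)<0$ persists on a maximal interval $[0,t^{*})$ with $t^{*}>0$. Combined with Lemma \ref{2.2}(ii), on this interval we also have $\|\nabla u(t)\|_p> r(1)>0$.

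Second, I would show that $u$ cannot be stationary on any subinterval of $[0,t^{*})$. Suppose to the contrary that $u_\tau\equiv 0$ on $[0,t_0]$ for some $t_0>0$; then $u(\cdot,\tau)\equiv u_0$ on $[0,t_0]$, and plugging this into the weak formulation \eqref{2.331} gives
\begin{equation*}
\int_0^{t_0}\left(\left(|\nabla u_0|^{p-2}\nabla u_0,\nabla\varphi\right)-\left(|u_0|^{q-1}u_0-\bbint_\Omega |u_0|^{q-1}u_0\operatorname dx,\varphi\right)\right)\operatorname d\tau=0
\end{equation*}
for every admissible $\varphi$. Choosing $\varphi=u_0$ (which is admissible and has zero mean, so the nonlocal term integrates to zero) yields $\|\nabla u_0\|_p^p=\|u_0\|_{q+1}^{q+1}$, i.e.\ $I(u_0)=0$, contradicting $I(u_0)<0$. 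Hence $\int_0^{t_0}\|u_\tau\|_{H^1}^2\,\operatorname d\tau>0$ for every $t_0\in(0,t^{*})$.

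Third, I would invoke the energy identity from Definition \ref{2.1},
\begin{equation*}
\int_0^{t}\|u_\tau\|_{H^1}^2\operatorname d\tau+J(u(t))=J(u_0)=d,
\end{equation*}
which together with the previous step gives $J(u(t_0))=d-\int_0^{t_0}\|u_\tau\|_{H^1}^2\operatorname d\tau<d$ for some $t_0\in(0,t^{*})$. Since also $I(u(t_0))<0$, the translated Cauchy problem with data $u(t_0)$ satisfies the hypotheses of Theorem \ref{theorem2} and hence blows up in finite time; by the uniqueness/continuation of weak solutions this propagates back to show that the original $u$ blows up in finite time as well.

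The main obstacle is the continuity-in-time argument for $I(u(t))$ combined with excluding the stationary pathology: the $p$-Laplacian term is only weakly continuous in general, so one must justify that $t\mapsto \|\nabla u(t)\|_p^p$ and $t\mapsto \|u(t)\|_{q+1}^{q+1}$ are genuinely continuous at $t=0$, typically via the weak formulation and the $L^2(0,T;W_N^{1,2})$ regularity of $u_t$; once this is in place the reduction to Theorem \ref{theorem2} is routine.
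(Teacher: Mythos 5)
Your proposal is correct and follows essentially the same route as the paper: use continuity of $I$ to keep $I(u(t))<0$ for small $t$, rule out the stationary case so that $\int_0^{t_0}\|u_\tau\|_{H^1}^2\,\mathrm{d}\tau>0$, invoke the energy identity to drop the energy strictly below $d$, and then restart at $t_0$ and apply the subcritical blowup result (Theorem \ref{theorem2}). The only cosmetic difference is that the paper derives $u_t\not\equiv 0$ directly from the identity $\tfrac12\tfrac{d}{dt}\|u\|_{H^1}^2=-I(u)>0$ rather than via your contradiction with the weak formulation, but these are the same computation.
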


The following theorem gives the main result for case $J\left(u_0\right)>d$.

\begin{theorem}\label{theorem5}
 Assume that $u_0\in W_N^{1,p}\left(\Omega\right)$, $J\left(u_0\right)$ is finite and $J\left(u_0\right)>d$, $I\left(u_0\right)>0$ and $\left\|u_0\right\|_{H^1}^2\leq\lambda_{J\left(u_0\right)}$, then problem (\ref{1.1}) has a global weak solution $u\in L^\infty\left(0,\infty; W_N^{1,p}(\Omega)\right)$ with $u_t\in L^2\left(0,\infty;W_N^{1,2}(\Omega)\right)$, and the weak solution is unique for $p\leq2$.

  Moreover, we have that $u\left(t\right)\rightarrow0$ as $t\rightarrow+\infty$.
\end{theorem}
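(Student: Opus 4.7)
The plan is to follow the Galerkin-plus-potential-well scheme with the usual sub-level set replaced by the ``high-energy'' admissible set
\[
\mathcal{F} = \{u \in W_N^{1,p}(\Omega) : I(u) > 0,\ \|u\|_{H^1}^2 < \lambda_{J(u_0)}\} \cup \{0\},
\]
and to close off the asymptotics by an $\omega$-limit argument. The key preliminary identity is obtained by testing the equation against $u$ itself and using the conservation $\int_\Omega u\,dx = 0$ (which kills the non-local term), yielding
\[
\frac{1}{2}\frac{d}{dt}\|u(t)\|_{H^1}^2 = -I(u(t)).
\]
Thus $\|u\|_{H^1}^2$ is strictly decreasing whenever $I(u) > 0$.

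\textbf{Local existence and invariance of $\mathcal{F}$.} I would construct Galerkin approximations on an eigenbasis of the Neumann Laplacian restricted to the zero-mean subspace, obtaining local existence and the standard energy identity $\int_0^t \|u_\tau\|_{H^1}^2\,d\tau + J(u(t)) = J(u_0)$. Since $I(u_0) > 0$, the identity above produces an immediate strict decrease of $\|u(t)\|_{H^1}^2$, so $u(t) \in \mathcal{F}$ for small $t > 0$. If $t_0$ were the first exit time, the boundary $\|\cdot\|_{H^1}^2 = \lambda_{J(u_0)}$ cannot be reached from below (the norm decreases strictly), so one must have $I(u(t_0)) = 0$ with $u(t_0) \not\equiv 0$. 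Then $u(t_0) \in \mathcal{N}$ and $J(u(t_0)) \leq J(u_0)$ give $u(t_0) \in \mathcal{N}^{J(u_0)}$, hence $\|u(t_0)\|_{H^1}^2 \geq \lambda_{J(u_0)}$; but also $\|u(t_0)\|_{H^1}^2 < \|u_0\|_{H^1}^2 \leq \lambda_{J(u_0)}$, a contradiction. Thus $u(t) \in \mathcal{F}$ throughout the maximal existence interval.

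\textbf{Global existence and uniqueness.} On $\mathcal{N}_+$ one has $J(u) \geq \frac{q+1-p}{p(q+1)}\|\nabla u\|_p^p \geq 0$, so the invariance combined with the energy identity supplies uniform bounds on $\|\nabla u\|_p$ and on $\int_0^T \|u_t\|_{H^1}^2\,dt$, uniformly in $T$. Aubin--Lions compactness (with Minty/monotonicity arguments for the $p$-Laplacian term) lets me pass to the limit in the Galerkin scheme, producing a global weak solution in $L^\infty(0,\infty;W_N^{1,p}(\Omega))$ with $u_t \in L^2(0,\infty;W_N^{1,2}(\Omega))$. For uniqueness when $p \leq 2$, I would subtract the equations for two solutions $u_1, u_2$, test against $u_1 - u_2$, drop the non-negative $p$-Laplacian contribution via monotonicity of $\xi \mapsto |\xi|^{p-2}\xi$, estimate the nonlinear difference in $L^{q+1}$ by Lemma \ref{2.7}, and apply Gronwall to $\|u_1 - u_2\|_{H^1}^2$.

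\textbf{Asymptotic decay via $\omega$-limits.} Since $J(u(t)) \geq 0$, the energy identity gives $\int_0^\infty \|u_t\|_{H^1}^2\,dt \leq J(u_0) < \infty$, so there is a sequence $t_n \to \infty$ with $\|u_t(t_n)\|_{H^1} \to 0$. The uniform $W^{1,p}$-bound and compact embedding yield (along a subsequence) $u(t_n) \rightharpoonup u^*$ weakly in $W^{1,p}$ and strongly in $L^{q+1}$. Passing to the limit in the time-shifted weak formulation on $[t_n, t_n+1]$ identifies $u^*$ as a stationary solution, and testing against $u^*$ forces $I(u^*) = 0$. If $u^* \not\equiv 0$, then lower semicontinuity of $J$ combined with strong $L^{q+1}$ convergence gives $J(u^*) \leq J(u_0)$, so $u^* \in \mathcal{N}^{J(u_0)}$ and $\|u^*\|_{H^1}^2 \geq \lambda_{J(u_0)}$. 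On the other hand, strict decrease of $\|u(t)\|_{H^1}^2$ for $t > 0$ implies that for any fixed small $\epsilon > 0$ we have $\|u(\epsilon)\|_{H^1}^2 < \lambda_{J(u_0)}$, and weak lower semicontinuity gives $\|u^*\|_{H^1}^2 \leq \liminf_n \|u(t_n)\|_{H^1}^2 \leq \|u(\epsilon)\|_{H^1}^2 < \lambda_{J(u_0)}$, a contradiction. Hence every $\omega$-limit point is $0$, and $u(t) \to 0$ as $t \to \infty$.

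\textbf{Main obstacle.} The most delicate step is the invariance of $\mathcal{F}$: one must simultaneously exploit the strict decrease of $\|u\|_{H^1}^2$ on $\mathcal{N}_+$ and the variational characterization of $\lambda_{J(u_0)}$ to rule out all exits from $\mathcal{F}$. The boundary hypothesis $\|u_0\|_{H^1}^2 = \lambda_{J(u_0)}$ in the statement is critical and is handled only by the identity $\frac{1}{2}\frac{d}{dt}\|u\|_{H^1}^2 = -I(u)$, which pushes the orbit into the strict interior of $\mathcal{F}$ instantaneously; this same strict decrease is also the decisive ingredient in the $\omega$-limit argument.
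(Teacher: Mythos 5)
Your proposal is correct and follows essentially the same route as the paper: the identity $\frac{1}{2}\frac{d}{dt}\|u\|_{H^1}^2=-I(u)$ combined with the variational characterization of $\lambda_{J(u_0)}$ to show the orbit stays in $\mathcal{N}_+$ with $\|u\|_{H^1}^2<\lambda_{J(u_0)}$, Galerkin plus monotonicity for global existence, the same Gronwall argument for uniqueness, and an $\omega$-limit argument for decay. The only notable difference is that your $\omega$-limit step is spelled out more carefully than the paper's (which jumps from $\omega(u_0)\cap\mathcal{N}=\varnothing$ to $\omega(u_0)=\{0\}$ without explicitly identifying $\omega$-limit points as stationary solutions), and the paper runs the invariance argument at the level of the Galerkin approximants $u_m$ rather than the solution itself, which is the cleaner way to secure the uniform a priori bounds.
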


\section{Subcritical initial energy $\boldsymbol J\mathbf{(u_0)}\boldsymbol<\boldsymbol d$}

In this section, we shall consider the global existence, uniqueness and decay estimate, and blowup of solutions to problem (\ref{1.1}) for the subcritical initial energy $J\left(u_0\right)<d$.

\textbf{Proof of Theorem \ref{theorem1}.}

\textbf{Global existence}.  Consider a orthonormal basis $\left\{\omega_j\left(x\right)\right\}$ on $W_N^{1,p}\left(\Omega\right)$,  which is also orthogonal in $L^2\left(\Omega\right)$. Define
\begin{equation*}
u_m\left(x,t\right)=\sum_{j=1}^mg_{jm}\left(t\right)\omega_j\left(x\right),\;\;\;m=1,2,\cdots,
\end{equation*}
where $g_{jm}\left(t\right)$ satisfies the initial value problem of the following equations
\begin{equation}\label{3.2}
\left(u_{mt},\omega_s\right)+\left(\nabla u_{mt},\nabla\omega_s\right)+\left(\left|\nabla u_m\right|^{p-2}\nabla u_m,\nabla\omega_s\right)=\left(f\left(u_m\right),\omega_s\right),
\end{equation}
\begin{equation}\label{3.3}
u_m\left(x,0\right)=\sum_{j=1}^ma_{jm}\omega_j\left(x\right)\rightarrow u_0\left(x\right)\quad{\rm in}\quad W_N^{1,p}\left(\Omega\right),
\end{equation}
for $1\leq s\leq m$, in which
\begin{equation*}
a_{jm}=g_{jm}{(0)}\;\mathrm{and}\;f\left(u\right)=\left|u\right|^{q-1}u-\bbint_\Omega\left|u\right|^{q-1}udx.
\end{equation*}

Multiplying the $j^{th}$ equation of (\ref{3.2}) by $g'_{sm}\left(t\right)$, summing up with respect to $j$, and
integrating with respect to the time variable from 0 to $t$, we readily get the equality
\begin{equation}\label{3.4}
\int_0^t\left\|u_{m\tau}\right\|_{H^1}^2\operatorname d\tau+J\left(u_m\right)=J\left(u_m\left(0\right)\right),\;\;\;0\leq t<\infty.
\end{equation}

From (\ref{3.3}), we obtain $J\left(u_m\left(0\right)\right)\rightarrow J\left(u_0\right)<d$ and $I\left(u_m\left(0\right)\right)\rightarrow I\left(u_0\right)>0$, which combining (\ref{3.4}) implies that
\begin{equation}\label{3.5}
\int_0^t\left\|u_{m\tau}\right\|_{H^1}^2\operatorname d\tau+J\left(u_m\right)<d,\;\;\;0\leq t<\infty.
\end{equation}

In fact, we obtain $u_m\left(t\right)\in W$ for sufficiently large $m$. If the above is false, then  there exists $t_0>0$ to satisfy $u_m(x,t_0)\in\partial W$,  i.e.
\begin{equation*}
I(u_m(t_0))=0,\;{\left\|\nabla u_m(t_0)\right\|}_p\neq0,\;\mathrm{or}\;J(u_m(t_0))=d.
\end{equation*}

On the one hand, from (\ref{3.5}), it is easy to know that $J(u_m(t_0))=d$ is not tenable. On the other hand, if $I(u_m(t_0))=0$ and ${\left\|\nabla u_m(t_0)\right\|}_p\neq0$, it can be seen from the definition of $d$ that $J(u(t_0))\geq d$, which is also contradict with (\ref{3.5}). Thus we get
 $u_m\left(t\right)\in W$.

 With the help of (\ref{3.5}) and
\begin{equation*}
J\left(u_m\right)=\frac1{q+1}I\left(u_m\right)+\left(\frac1p-\frac1{q+1}\right)\left\|\nabla u_m\right\|_p^p,
\end{equation*}
we have
\begin{equation*}
\int_0^t\left\|u_{m\tau}\right\|_{H^1}^2\operatorname d\tau+\left(\frac1p-\frac1{q+1}\right)\left\|\nabla u_m\right\|_p^p<d,\;\;\;0\leq t<\infty,
\end{equation*}
where $m$ is sufficiently large, which yields a priori estimate
\begin{equation*}
\int_0^t\left\|u_{m\tau}\right\|_{H^1}^2\operatorname d\tau<d,\;\;\;0\leq t<\infty,
\end{equation*}
\begin{equation*}
\left\|\nabla u_m\right\|_p^p<\frac{p\left(q+1\right)}{q+1-p}d,\;\;\;0\leq t<\infty,
\end{equation*}
\begin{equation*}
\left\|u_{{}_m}^q\right\|_\gamma^\gamma=\left\|u_m\right\|_{q+1}^{q+1}\leq C_\ast^{q+1}\left(\frac{p\left(q+1\right)}{q+1-p}d\right)^\frac{q+1}p,\;\gamma=\frac{q+1}q,\;\;0\leq t<\infty.
\end{equation*}
Then, there exists a subsequence of $\left\{u_m\right\}$, denoted by the same symbol satisfying
\begin{equation*}
\left\{\begin{array}{l}u_m\rightarrow u\;\;\mathrm{in}\;L^\infty(0,\infty;\;W_N^{1,p}(\Omega))\;\mathrm{weak}\;\mathrm{star}\;\mathrm{and}\;\mathrm a.\mathrm e.\;\mathrm{in}\;Q=\Omega\times(0,\;\infty),\\u_{mt}\rightarrow u_t\;\;\mathrm{in}\;L^2(0,\infty;\;W_N^{1,2}(\Omega))\;\mathrm{weak},\\\left|\nabla u_m\right|^{p-2}\nabla u_m\rightarrow\chi\;\;\mathrm{in}\;L^\infty(0,\infty;\;L^\frac p{p-1}(\Omega))\;\mathrm{weak}\;\mathrm{star},\\\left|u_m\right|^{q-1}u_m\rightarrow\left|u\right|^{q-1}u\;\;\mathrm{in}\;L^\infty(0,\infty;\;L^\frac{q+1}q(\Omega))\;\mathrm{weak}\;\mathrm{star}.\end{array}\right.
\end{equation*}

We fixed $s$ in (\ref{3.2}) and letting $m\rightarrow\infty$,  there hold
\begin{equation*}
\left(u_t,\omega_s\right)+\left(\nabla u_t,\nabla\omega_s\right)+\left(\chi,\nabla\omega_s\right)=\left(f\left(u\right),\omega_s\right)\;\mathrm{for}\;\mathrm{all}\;s,
\end{equation*}
and
\begin{equation}\label{3.1666}
\left(u_t,v\right)+\left(\nabla u_t,\nabla v\right)+\left(\chi,\nabla v\right)=\left(f\left(u\right),v\right)\;\mathrm{for}\;\mathrm{all}\;v\in W_N^{1,p}\left(\Omega\right),\;t\in\left(0,\;\infty\right).
\end{equation}

Next, we only need to prove that there is $\chi=\left|\nabla u\right|^{p-2}\nabla u$ in the weak sense, i.e.
\begin{equation}\label{3.16666}
\left(\chi,\nabla v\right)=\left(\left|\nabla u\right|^{p-2}\nabla u,\nabla v\right),\;\;\;\forall v\in W_N^{1,p}\left(\Omega\right).
\end{equation}

In fact, for any $\psi\in L^\infty(0,T;W_N^{1,p}\left(\Omega\right)),\;\varsigma\in W_N^{1,p}\left(\Omega\right),\;0\;\leq\varsigma\leq1$, we obtain
\begin{equation*}
  \int_\Omega\varsigma\left(\left|\nabla u_m\right|^{p-2}\nabla u_m-\left|\nabla\psi\right|^{p-2}\nabla\psi\right)\nabla\left(u_m-\psi\right)\operatorname dx\geq0,
\end{equation*}
i.e.
\begin{equation}\label{3.177}
  \begin{array}{l}\int_\Omega\varsigma\left|\nabla u_m\right|^{p-2}\left|\nabla u_m\right|^2\operatorname dx-\int_\Omega\varsigma\left|\nabla u_m\right|^{p-2}\nabla u_m\nabla\psi\operatorname dx\\\;\;\;\;\;\;\;\;\;\;\;\;\;\;\;\;\;\;\;\;\;\;\;\;\;\;\;\;\;\;\;\;\;\;\;\;\;\;\;\;\;\;-\int_\Omega\varsigma\left|\nabla\psi\right|^{p-2}\nabla\psi\nabla\left(u_m-\psi\right)\operatorname dx\geq0.\end{array}
\end{equation}

Noticing that
\begin{equation}\label{3.1777}
\begin{array}{l}\int_\Omega\varsigma\left|\nabla u_m\right|^{p-2}\left|\nabla u_m\right|^2\operatorname dx\\=-\int_\Omega\nabla\varsigma\left|\nabla u_m\right|^{p-2}\nabla u_mu_m\operatorname dx-\int_\Omega\varsigma\nabla\left(\left|\nabla u_m\right|^{p-2}\nabla u_m\right)u_m\operatorname dx\\=-\int_\Omega\nabla\varsigma\left|\nabla u_m\right|^{p-2}\nabla u_mu_m\operatorname dx-\int_\Omega\varsigma u_{mt}u_m\operatorname dx-\int_\Omega\varsigma\nabla u_{mt}\nabla u_m\operatorname dx\\\;\;\;\;-\int_\Omega\nabla\varsigma\nabla u_{mt}u_m\operatorname dx+\int_\Omega\varsigma\left|u\right|_m^{q-1}u_m^{}u_m\operatorname dx,\end{array}
\end{equation}

Letting $m\rightarrow\infty$ in (\ref{3.177}), combined with (\ref{3.1777}),we find
\begin{equation}\label{3.17777}
\begin{array}{l}-\int_\Omega\nabla\varsigma\chi u\operatorname dx-\int_\Omega\varsigma u_tu\operatorname dx-\int_\Omega\varsigma\nabla u_t\nabla u\operatorname dx-\int_\Omega\nabla\varsigma\nabla u_tu\operatorname dx\\+\int_\Omega\varsigma\left|u\right|_{}^{q-1}uu\operatorname dx-\int_\Omega\varsigma\chi\nabla\psi\operatorname dx-\int_\Omega\varsigma\left|\nabla\psi\right|^{p-2}\nabla\psi\nabla\left(u-\psi\right)\operatorname dx\geq0.\end{array}
\end{equation}

Choosing $v=u\varsigma$ in (\ref{3.1666}), we have
\begin{equation}\label{3.177777}
   \begin{array}{l} \int_\Omega u_tu\varsigma\operatorname dx+\int_\Omega\nabla u_t\nabla u\varsigma\operatorname dx+\int_\Omega\nabla u_tu\nabla\varsigma\operatorname dx+\int_\Omega\chi\nabla u\varsigma\operatorname dx+\int_\Omega\chi u\nabla\varsigma\operatorname dx\\=\int_\Omega\left|u\right|_{}^{q-1}uu\varsigma\operatorname dx.\end{array}
\end{equation}

Combining (\ref{3.17777}) with (\ref{3.177777}), we get
\begin{equation*}
  \int_\Omega\varsigma\left(\chi-\left|\nabla\psi\right|^{p-2}\nabla\psi\right)\nabla\left(u-\psi\right)\operatorname dx\geq0.
\end{equation*}
Taking $\psi=u-\lambda v,\;\lambda\geq0,\;v\in W_N^{1,p}\left(\Omega\right)$ in the above inequality, we obtain
\begin{equation}\label{3.1888}
  \int_\Omega\varsigma\left(\chi-\left|\nabla\left(u-\lambda v\right)\right|^{p-2}\nabla\left(u-\lambda v\right)\right)\nabla v\operatorname dx\geq0.
\end{equation}
Choosing $\lambda\rightarrow0$ in (\ref{3.1888}), we get
\begin{equation*}
  \int_\Omega\varsigma\left(\chi-\left|\nabla u\right|^{p-2}\nabla u\right)\nabla v\operatorname dx\geq0,\;\forall v\in W_N^{1,p}\left(\Omega\right).
\end{equation*}
Obviously, if we choose $\lambda\leq0$, we can deduce the similar inequality replacing $\geq$ by $\leq$. Hence, (\ref{3.16666}) holds. Furthermore, (\ref{3.3}) gives $u(x,0)=u_0(x)$ in $W_N^{1,p}(\Omega)$.

\textbf{Uniqueness}. Let $u_1$ and $u_2$ be the weak solutions of (\ref{1.1}) with the same initial data. Define $v=u_1-u_2$, we have
\begin{equation}\label{3.17}
v_\tau\;+\nabla v_\tau+\mathrm{div}\left(\vert\nabla u_1\vert^{p-2}\nabla u_1\right)-\mathrm{div}\left(\vert\nabla u_2\vert^{p-2}\nabla u_2\right)=\vert u_1\vert^{q-1}u_1-\vert u_2\vert^{q-1}u_2,
\end{equation}
\begin{equation*}
u_1,u_2,v\in L^\infty\left(0,\infty;W_N^{1,p}\right),\;u_{1t},u_{2t},v_t\in L^2\left(0,\infty;W_N^{1,2}\right).
\end{equation*}

Multiplying (\ref{3.17}) by $v$, integrating over $\left(0,t\right)\times\Omega$, we get
\begin{equation}\label{3.19}
\begin{array}{l}\int_0^t\int_\Omega^{}{(v_\tau\;v+\nabla v_\tau\nabla v+\left(\vert\nabla u_1\vert^{p-2}\nabla u_1-\vert\nabla u_2\vert^{p-2}\nabla u_2\right)\nabla v\;)}\operatorname dx\operatorname d\tau\\=\int_0^t\int_\Omega^{}\left(\vert u_1\vert^{q-1}u_1-\vert u_2\vert^{q-1}u_2\right)v\;\operatorname dx\operatorname d\tau.\end{array}
\end{equation}

By Lemma \ref{2.7}, we get
\begin{equation}\label{3.20}
\begin{array}{l}\int_0^t\int_\Omega\left(\left|u_1\right|^{q-1}u_1-\left|u_2\right|^{q-1}u_2\right)v\operatorname dx\operatorname d\tau\\\leq\int_0^t\int_\Omega q\left(\left|u_1\right|+\left|u_2\right|\right)^{q-1}\left|u_1-u_2\right|\left|v\right|\operatorname dx\operatorname d\tau\\\leq C\int_0^t{\left\|\left(\left|u_1\right|+\left|u_2\right|\right)^{q-1}\right\|}_{A_1}{\left\|u_1-u_2\right\|}_{A_2}{\left\|v\right\|}_{A_3}\operatorname d\tau\\=C\int_0^t\left\|\left|u_1\right|+\left|u_2\right|\right\|_{{}^{\left(q-1\right)}A_1}^{q-1}{\left\|u_1-u_2\right\|}_{A_2}{\left\|v\right\|}_{A_3}\operatorname d\tau\\\leq C\int_0^t{\left\|u_1-u_2\right\|}_{H^1}{\left\|v\right\|}_{H^1}\operatorname d\tau\\\leq C\int_0^t\left\|v\right\|_{H^1}^2\operatorname d\tau,\end{array}
\end{equation}
where $(q-1)A_1=q+1$ and $A_2=A_3=q+1<\frac{2n}{n-2}$ by $p\leq2$. As $v(x,0)=0$, we obtain
\begin{equation}\label{3.1995}
\begin{array}{l}\int_0^t\int_\Omega{(v_\tau v+\nabla v_\tau\nabla v)}\operatorname dx\operatorname d\tau=\frac1{\;2}\int_\Omega{(\left.v^2\left(\tau\right)\right|_{v\left(0\right)}^{v\left(t\right)}+\left.\left|\nabla v\left(\tau\right)\right|^2\right|_{v\left(0\right)}^{v\left(t\right)})}\operatorname dx\\\;\;\;\;\;\;\;\;\;\;\;\;\;\;\;\;\;\;\;\;\;\;\;\;\;\;\;\;\;\;\;\;\;\;\;\;\;\;\;\;\;\;\;=\frac12\int_\Omega{(v^2\left(t\right)+\left|\nabla v\left(t\right)\right|^2)}\operatorname dx\\\;\;\;\;\;\;\;\;\;\;\;\;\;\;\;\;\;\;\;\;\;\;\;\;\;\;\;\;\;\;\;\;\;\;\;\;\;\;\;\;\;\;\;=\frac12\left\|v\left(t\right)\right\|_{H^1}^2.\end{array}
\end{equation}
Let $\overset-v=\theta_1u_1+\left(1-\theta_1\right)u_2,\;\theta_1\in\left[0,1\right],$ i.e.,
\begin{equation}\label{3.21}
\int_\Omega\left(\left|\nabla u_1\right|^{p-2}\nabla u_1-\left|\nabla u_2\right|^{p-2}\nabla u_2\right)\nabla v\operatorname dx=\int_\Omega\left(p-1\right)\left|\nabla\overset-v\right|^{p-2}\left|\nabla v\right|^2\operatorname dx.
\end{equation}
It follows from (\ref{3.19}), (\ref{3.20}), (\ref{3.1995}) and (\ref{3.21}) that
\begin{equation*}
\left\|v\right\|_{H^1}^2\leq C\int_0^t\left\|v\right\|_{H^1}^2\operatorname d\tau.
\end{equation*}
By Gronwall's inequality, we obtain $\left\|v\right\|_{H^1}^2\leq0$, that is, $\left\|v\right\|_{H^1}^2=\left\|u_1-u_2\right\|_{H^1}^2=0$. Thus $u_1=u_2=0$ a.e. in $\Omega\times\left(0,\infty\right)$.

\textbf{Asymptotic behavior}. Letting $\varphi=u$ in (\ref{2.331}) and
\begin{equation*}
  \int_0^t\int_\Omega\left(\bbint_\Omega^{}\left|u\right|^{q-1}u\operatorname dx\right)u\operatorname dx\operatorname d\tau=\int_0^t\left(\bbint_\Omega^{}\left|u\right|^{q-1}u\operatorname dx\right)\int_\Omega u\operatorname dx\operatorname d\tau=0,
\end{equation*}
we find
\begin{equation*}
  \frac12\left\|u\right\|_{H^1}^2-\frac12\left\|u_0\right\|_{H^1}^2+\;\int_0^t\left(\left\|\nabla u\right\|_p^p-\left\|u\right\|_{q+1}^{q+1}\right)\operatorname ds=0,
\end{equation*}
then
\begin{equation}\label{3.22}
  \frac12\frac d{dt}\left\|u\right\|_{H^1}^2=-I\left(u\right).
\end{equation}

By Proposition \ref{2.1}, we know that $u\in W_\delta$ for $1\leq\delta<\overline\delta$ or $\delta_1<\delta<\delta_2$ with $\delta_1<1<\delta_2$,
and particularly $I(u)>0$. Then, the norm ${\left\|u\right\|}_{H^1}$ is equivalent to the norm ${\left\|\nabla u\right\|}_2$ on $H^1\left(\Omega\right)$ and combining (\ref{2.55}), we get
\begin{equation}\label{3.23}
\begin{array}{l}\;\;\frac12\frac d{dt}\left\|u\right\|_{H^1}^2=-I\left(u\right)=-\left\|\nabla u\right\|_p^p+\left\|u\right\|_{q+1}^{q+1}\\\;\;\;\;\;\;\;\;\;\;\;\;\;\;\;\;\;\;\;\leq-\left\|\nabla u\right\|_p^p+C_\ast^{q+1}\left\|\nabla u\right\|_p^{q+1}\\\;\;\;\;\;\;\;\;\;\;\;\;\;\;\;\;\;\;\;=\left(C_\ast^{q+1}\left\|\nabla u\right\|_p^{q-p+1}-1\right)\left\|\nabla u\right\|_p^p\\\;\;\;\;\;\;\;\;\;\;\;\;\;\;\;\;\;\;\;\leq C\left(C_\ast^{q+1}\left\|\nabla u\right\|_p^{q-p+1}-1\right)\left\|\nabla u\right\|_2^2\\\;\;\;\;\;\;\;\;\;\;\;\;\;\;\;\;\;\;\;\leq C\left(C_\ast^{q+1}\left\|\nabla u\right\|_p^{q-p+1}-1\right)\left\|u\right\|_{H^1}^2.\end{array}
\end{equation}

Now, we will discuss $C_\ast^{q+1}\left\|\nabla u\right\|_p^{q-p+1}-1<0$. Taking into account (\ref{2.1}) and $I(u)>0$ we deduce that
\begin{equation*}
  J(u)>\left(\frac1p-\frac1{q+1}\right)\;\left\|\nabla u\right\|_p^p,
\end{equation*}
that is,
\begin{equation}\label{3.24}
\left\|\nabla u\right\|_p^p<\frac{p\left(q+1\right)}{q+1-p}J\left(u\right)\leq\frac{p\left(q+1\right)}{q+1-p}J\left(u_0\right).
\end{equation}

By (\ref{3.24}),  we see that
\begin{equation}\label{3.25}
C_\ast^{q+1}\left\|\nabla u\right\|_p^{q+1-p}<C_\ast^{q+1}\left(\frac{p\left(q+1\right)}{q+1-p}J\left(u_0\right)\right)^\frac{q-p+1}p<1.
\end{equation}

From (\ref{3.23}) it follows that
\begin{equation*}
\frac12\frac d{dt}\left\|u\right\|_{H^1}^2<C\left(C_\ast^{q+1}\left(\frac{p\left(q+1\right)}{q+1-p}J\left(u_0\right)\right)^\frac{q-p+1}p-1\right)\left\|u\right\|_{H^1}^2.
\end{equation*}

Considering (\ref{3.25}), we can know that there exists $\delta>0$ such that
\begin{equation*}\label{3.27}
-\delta:=C\left(C_\ast^{q+1}\left(\frac{p\left(q+1\right)}{q+1-p}J\left(u_0\right)\right)^\frac{q-p+1}p-1\right).
\end{equation*}
Applying Gronwall's inequality, we infer for $\delta>0$
\begin{equation*}\label{3.28}
\left\|u\right\|_{H^1}^2<\left\|u_0\right\|_{H^1}^2e^{-2\delta t},\;0\leq t<\infty.
\end{equation*}
Theorem \ref{theorem1} is proved.

Next, in order to prove $u(t)$ blowing up in finite time, we give a relationship between $\left\|\nabla u\right\|_p^p$ and the depth $d$ of potential well with $u_0\in V$.

\begin{lemma}\label{3.1}
Let $u_0\in V$, then
\begin{equation*}\label{3.29}
\left\|\nabla u\right\|_p^p>\frac{p\left(q+1\right)}{q+1-p}d.
\end{equation*}
\end{lemma}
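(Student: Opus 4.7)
I plan to obtain the estimate by scaling $u_0$ onto the Nehari manifold $\mathcal N$ via Lemma~\ref{2.3} and then invoking the variational characterisation $d=\inf_{\mathcal N}J$. The whole argument reduces to one sign observation plus the computation already performed in Lemma~\ref{2.4}.

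Since $u_0\in V$ I have $I(u_0)<0$ and hence $\left\|u_0\right\|_{q+1}\neq 0$, so Lemma~\ref{2.3} applies to $u_0$ and produces a unique
\begin{equation*}
\lambda^{\ast}=\left(\frac{\left\|\nabla u_0\right\|_p^p}{\left\|u_0\right\|_{q+1}^{q+1}}\right)^{\frac{1}{q-p+1}}
\end{equation*}
at which $I(\lambda^{\ast}u_0)=0$. The sign information $I(1\cdot u_0)=I(u_0)<0$, together with part~(ii) of Lemma~\ref{2.3} which asserts $I(\lambda u_0)>0$ for $0<\lambda<\lambda^{\ast}$, forces the strict inequality $\lambda^{\ast}<1$; this is the only non-trivial input of the proof.

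Next, $\lambda^{\ast}u_0\in\mathcal N$ yields $d\le J(\lambda^{\ast}u_0)$. Using the Nehari identity $(\lambda^{\ast})^{q+1}\left\|u_0\right\|_{q+1}^{q+1}=(\lambda^{\ast})^{p}\left\|\nabla u_0\right\|_p^p$ to eliminate the $(q+1)$-norm, a routine calculation (identical in structure to the one inside the proof of Lemma~\ref{2.4}) gives
\begin{equation*}
J(\lambda^{\ast}u_0)=\left(\frac{1}{p}-\frac{1}{q+1}\right)(\lambda^{\ast})^{p}\left\|\nabla u_0\right\|_p^p=\frac{(q+1-p)(\lambda^{\ast})^{p}}{p(q+1)}\left\|\nabla u_0\right\|_p^p.
\end{equation*}

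Combining these two facts with the bound $(\lambda^{\ast})^{p}<1$ delivers $d<\frac{q+1-p}{p(q+1)}\left\|\nabla u_0\right\|_p^p$, which rearranges to the claimed inequality. The only point that requires care is verifying $\lambda^{\ast}<1$; after that the argument reduces to algebraic manipulation of the Nehari identity, and I do not anticipate any further obstacle.
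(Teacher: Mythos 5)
Your derivation of the inequality for $u_0$ itself is correct: $I(u_0)<0$ forces $\lambda^{\ast}<1$ by Lemma \ref{2.3}(ii), the rescaled function $\lambda^{\ast}u_0$ lies on $\mathcal N$, and $d\le J(\lambda^{\ast}u_0)=\frac{q+1-p}{p(q+1)}(\lambda^{\ast})^{p}\Vert\nabla u_0\Vert_p^p<\frac{q+1-p}{p(q+1)}\Vert\nabla u_0\Vert_p^p$. This is also a genuinely different route from the paper's: the paper never rescales onto $\mathcal N$, but instead combines Lemma \ref{2.2}(ii) (if $I(u)<0$ then $\Vert\nabla u\Vert_p>r(1)$) with the explicit value of $d$ computed in Lemma \ref{2.4}. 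Your version has the advantage of relying only on the variational characterisation $d=\inf_{\mathcal N}J$ rather than on the closed-form expression for $d$; the two arguments are of comparable length.

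There is, however, a gap. The lemma is not a statement about $u_0$ but about the solution $u=u(t)$ at an arbitrary time of its existence interval; this is how it is used in the proof of Theorem \ref{theorem2}, where the lower bound on $\Vert\nabla u(t)\Vert_p^p$ must hold inside $\xi(t)$ for every $t$. Your proof establishes the bound only at $t=0$. To propagate it you must first invoke the invariance of the unstable set (Proposition \ref{2.1}): since $u_0\in V$, the solution satisfies $u(t)\in V_{\delta}$, and in particular $I(u(t))<0$, for all $t$ in the existence interval; only then can you run your rescaling argument with $u(t)$ in place of $u_0$ to get $\lambda^{\ast}(t)<1$. Without that step the sign condition $I(u(t))<0$ is simply not available for $t>0$, and this is precisely where the full hypothesis $u_0\in V$ (i.e.\ $J(u_0)<d$ and not merely $I(u_0)<0$) enters. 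Adding this one invocation of Proposition \ref{2.1} at the start makes your proof complete.
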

\begin{proof}
Assume that $u(t)$ is a weak solution of (\ref{1.1}) with $J(u_0)<d$ and $I(u_0)<0$, $T$ is the maximal existence time. Recalling Proposition \ref{2.1},  it is easy to see that $u(x,t)\in V_\delta$, that is, $I(u)<0$ for $0<t<T$. From Lemma \ref{2.2}, take $\delta=1$, we have
\begin{equation*}
\left\|\nabla u\right\|_p^p>\left(\frac1{C_\ast^{q+1}}\right)^\frac p{q+1-p}.
\end{equation*}
Coming back to Lemma \ref{2.4}, we conclude that
\begin{equation*}\label{3.30}
d<\frac{q+1-p}{p\left(q+1\right)}\left\|\nabla u\right\|_p^p.
\end{equation*}
\end{proof}
\textbf{Proof of Theorem \ref{theorem2}.} We will prove $u(t)$ blowing up in finite time with $u_0\in V$. Arguing by contradiction, assume that the solution is global in time.

Taking into account $u_0\in V$ and Proposition \ref{2.1}, we obtain $u\in V_\delta$ for $t\in [0, +\infty)$. Denoting
\begin{equation*}\label{3.31}
G\left(t\right)=\int_0^t\left\|u\right\|_{H^1}^2\operatorname d\tau+\left(T_0-t\right)\left\|u_0\right\|_{H^1}^2,\;t\in\left[0,T_0\right],
\end{equation*}
where $0<T_0<+\infty$. Obviously, for any $t\in [0,T_0]$ we get $G(t)>0$. Applying the continuity of $G(t)$ with respect to $t$, we infer that there exists a constant $\theta>0$, for $t\in [0,T_0]$ such that $G(t)\geq\theta$. Then
\begin{equation}\label{3.32}
\begin{array}{l}G'\left(t\right)=\left\|u\right\|_{H^1}^2-\left\|u_0\right\|_{H^1}^2\\\;\;\;\;\;\;\;\;\;=\left\|u\right\|_2^2+\left\|\nabla u\right\|_2^2-\left\|u_0\right\|_2^2-\left\|\nabla u_0\right\|_2^2\\\;\;\;\;\;\;\;\;\;=2\int_0^t\left(u,u_\tau\right)\operatorname d\tau+2\int_0^t\left(\nabla u,\nabla u_\tau\right)\operatorname d\tau,\end{array}
\end{equation}
and (\ref{3.22}) gives
\begin{equation}\label{3.33}
G''\left(t\right)=2\left(u,u_t\right)+2\left(\nabla u,\nabla u_t\right)=-2I\left(u\right).
\end{equation}
By (\ref{3.32}) and Cauchy-Schwarz inequality,  we deduce
\begin{equation}\label{3.34}
\begin{array}{l}\left(G'\left(t\right)\right)^2=4\left(\int_0^t{\left(u,u_\tau\right)}_{}\operatorname d\tau+\int_0^t{\left(\nabla u,\nabla u_\tau\right)}_{}\operatorname d\tau\right)^2\\\;\;\;\;\;\;\;\;\;\;\;\;\;\;=4\left[\left(\int_0^t\left(u,u_\tau\right)\operatorname d\tau\right)^2+\left(\int_0^t\left(\nabla u,\nabla u_\tau\right)\operatorname d\tau\right)^2+2\int_0^t\left(u,u_\tau\right)\operatorname d\tau\int_0^t\left(\nabla u,\nabla u_\tau\right)\operatorname d\tau\right]\\\;\;\;\;\;\;\;\;\;\;\;\;\;\;\leq 4\Big[\int_0^t\left\|u\right\|_2^2\operatorname d\tau\int_0^t\left\|u_\tau\right\|_2^2\operatorname d\tau+\int_0^t\left\|\nabla u\right\|_2^2\operatorname d\tau\int_0^t\left\|\nabla u_\tau\right\|_2^2\operatorname d\tau\\\;\;\;\;\;\;\;\;\;\;\;\;\;\;\;\;\;\;+2\left(\int_0^t\left\|u\right\|_2^2\operatorname d\tau\right)^\frac12\left(\int_0^t\left\|u_\tau\right\|_2^2\operatorname d\tau\right)^\frac12\left(\int_0^t\left\|\nabla u\right\|_2^2\operatorname d\tau\right)^\frac12\left(\int_0^t\left\|\nabla u_\tau\right\|_2^2\operatorname d\tau\right)^\frac12\Big]\\\;\;\;\;\;\;\;\;\;\;\;\;\;\;\leq 4\Big(\int_0^t\left\|u\right\|_2^2\operatorname d\tau\int_0^t\left\|u_\tau\right\|_2^2\operatorname d\tau+\int_0^t\left\|\nabla u\right\|_2^2\operatorname d\tau\int_0^t\left\|\nabla u_\tau\right\|_2^2\operatorname d\tau\\\;\;\;\;\;\;\;\;\;\;\;\;\;\;\;\;\;\;+\int_0^t\left\|u\right\|_2^2\operatorname d\tau\int_0^t\left\|\nabla u_\tau\right\|_2^2\operatorname d\tau+\int_0^t\left\|\nabla u\right\|_2^2\operatorname d\tau\int_0^t\left\|u_\tau\right\|_2^2\operatorname d\tau\Big)\\\;\;\;\;\;\;\;\;\;\;\;\;\;\;\leq4\left(\int_0^t\left\|u\right\|_2^2\operatorname d\tau+\int_0^t\left\|\nabla u\right\|_2^2\operatorname d\tau\right)\left(\int_0^t\left\|u_\tau\right\|_2^2\operatorname d\tau+\int_0^t\left\|\nabla u_\tau\right\|_2^2\operatorname d\tau\right)\\\;\;\;\;\;\;\;\;\;\;\;\;\;\;\leq4\int_0^t\left\|u\right\|_{H^1}^2\operatorname d\tau\int_0^t\left\|u_\tau\right\|_{H^1}^2\operatorname d\tau\leq4G\left(t\right)\int_0^t\left\|u_\tau\right\|_{H^1}^2\operatorname d\tau.\end{array}
\end{equation}
Considering (\ref{3.33}) and (\ref{3.34}),  it is easy to see that
\begin{equation*}\label{3.35}
\begin{array}{l}G''\left(t\right)G\left(t\right)-\frac{q+3}4\left(G'\left(t\right)\right)^2\geq G\left(t\right)\left(G''\left(t\right)-\left(q+3\right)\int_0^t\left\|u_\tau\right\|_{H^1}^2\operatorname d\tau\right)\\\;\;\;\;\;\;\;\;\;\;\;\;\;\;\;\;\;\;\;\;\;\;\;\;\;\;\;\;\;\;\;\;\;\;\;\;\;\;\;\;\;=G\left(t\right)\left(-2I\left(u\right)-\left(q+3\right)\int_0^t\left\|u_\tau\right\|_{H^1}^2\operatorname d\tau\right).\end{array}
\end{equation*}
Let
\begin{equation*}\label{3.36}
\xi\left(t\right)=-2I\left(u\right)-\left(q+3\right)\int_0^t\left\|u_\tau\right\|_{H^1}^2\operatorname d\tau.
\end{equation*}
Recalling (\ref{2.1}),  it follows that
\begin{equation*}\label{3.37}
J\left(u\right)=\left(\frac1p-\frac1{q+1}\right)\left\|\nabla u\right\|_p^p+\frac1{q+1}I\left(u\right),
\end{equation*}
Therefore,
\begin{equation*}\label{3.38}
\xi\left(t\right)=2\left(\frac{q+1}p-1\right)\left\|\nabla u\right\|_p^p-2\left(q+1\right)J\left(u_0\right)+\left(q-1\right)\int_0^t\left\|u_\tau\right\|_{H^1}^2\operatorname d\tau.
\end{equation*}
Now, we will discuss in two cases.

(i) If $0<J(u_0)<d$, from Lemma \ref{3.1} it follows that
\begin{equation}\label{3.39}
\xi\left(t\right)>2\left(q+1\right)d-2\left(q+1\right)J\left(u_0\right)+\left(q-1\right)\int_0^t\left\|u_\tau\right\|_{H^1}^2\operatorname d\tau=\rho>0.
\end{equation}
Then  we get
\begin{equation*}\label{3.40}
G''\left(t\right)G\left(t\right)-\frac{q+3}4\left(G'\left(t\right)\right)^2\geq\theta\rho>0,\;t\in\left[0,T_0\right],
\end{equation*}
which yields
\begin{equation*}\label{3.41}
\left(G^{-\vartheta}\left(t\right)\right)''=-\frac\vartheta{G^{\vartheta+2}\left(t\right)}\left(G''\left(t\right)G\left(t\right)-\left(\vartheta+1\right)\left(G'\left(t\right)\right)^2\right)<0,\;\vartheta=\frac{q-1}4.
\end{equation*}
Thus,  from the proof of \cite[Theorem 4.3]{30}, there exists a $T>0$ such that
\begin{equation*}\label{3.42}
\lim_{t\rightarrow T}G^{-\vartheta}\left(t\right)=0,
\end{equation*}
and
\begin{equation*}\label{3.43}
\lim_{t\rightarrow T}G\left(t\right)=+\infty,
\end{equation*}
which is a contradiction with $T=+\infty$.

(ii) If $J(u_0)\leq0$,  it is easy to get (\ref{3.39}) directly. The rest is proved similar to case (i).

The proof of Theorem \ref{theorem2} is complete.

\section{Critical initial energy $J\left(u_0\right)=d$}

In this section, we shall prove the global existence, uniqueness and decay estimate, and blowup of solutions to problem (\ref{1.1}) for the critical initial energy $J\left(u_0\right)=d$.

\textbf{Proof of Theorem \ref{theorem3}.}

\textbf{Global existence and uniqueness}. From the condition $J\left(u_0\right)=d$,  it follows that ${\left\|u_0\right\|}_{W^{1,p}}\neq0$. Denote $\mu_s=1-\frac1s$ and $u_{s0}=\mu_su_0$ for $s=2,3,\cdots$. We consider the problem (\ref{1.1}) with the condition
\begin{equation}\label{4.1}
u\left(x,0\right)=u_{s0}\left(x\right),\;s=2,3,\cdots.
\end{equation}

Coming back to Lemma \ref{2.3} and the initial data $I(u_0)\geq0$,  we obtain $\lambda^\ast\geq1$. Further, we deduce $I(u_{s0})=I(\mu_su_0)>0$ and $J(u_{s0})=J(\mu_su_0)<J(u_0)=d$. Due to Theorem \ref{theorem1}, the problem (\ref{1.1}) with the initial condition (\ref{4.1}) exists a unique global solution $u_s(t)\in L^\infty(0,\infty;W_N^{1,p}(\Omega))$ with $u_{st}(t)\in L^2\;(0,\infty;W_N^{1,2}(\Omega))$ for each $s=2,3,\cdots$. By Proposition \ref{2.1}, we can know that $u_s\left(x,t\right)\in W$. As in the proof of Theorem \ref{theorem1}, we readily get
\begin{equation*}\label{4.2}
\int_0^t\left\|u_{s\tau}\right\|_{H^1}^2\operatorname d\tau<d,\;\;\;0\leq t<\infty,
\end{equation*}
and
\begin{equation*}\label{4.3}
\left\|\nabla u_s\right\|_p^p<\frac{p\left(q+1\right)}{q+1-p}d,\;\;\;0\leq t<\infty.
\end{equation*}
The rest is proved similar to Theorem \ref{theorem1}.

\textbf{Asymptotic behavior}. The existence of the global solution $u(t)$ of (\ref{1.1}) is proved by the above, then we claim that $u\in W$ for $t>0$.

Arguing by contradiction, let $t_0>0$ is the first time that $I(u(t_0))=0$. Through the definition of $d$, we can know that $J(u(t_0))\geq d$. But
\begin{equation}\label{4.4}
0<J(u(t_0))=d-\int_0^{t_0}\left\|u_\tau\right\|_{H^1}^2\operatorname d\tau=d_1\leq d
\end{equation}
for any $t_0>0$.  Then, we have
\begin{equation}\label{4.5}
J(u(t_0))=d.
\end{equation}

Considering (\ref{4.4}) and (\ref{4.5}), gives
\begin{equation*}\label{4.6}
\int_0^{t_0}\left\|u_\tau\right\|_{H^1}^2\operatorname d\tau=0,
\end{equation*}
that is, $u_t\equiv0$ for $0\leq t\leq t_0$, which contradicts $I(u_0)>0$ (from (\ref{3.33})). Then, we infer $u\in W$ for $0<t<\infty$. Let $t_1>0$ as the initial time, it generates $u(x,t)\in W$ for $t>t_1$. Then by (\ref{3.23})-(\ref{3.25}) we can know that there exists a constant $\varsigma>0$ as
\begin{equation*}\label{4.7}
-\varsigma=C\left(C_\ast^{q+1}\left(\frac{p\left(q+1\right)}{q+1-p}J\left(u\left(t_1\right)\right)\right)^\frac{q-p+1}p-1\right)
\end{equation*}
such that
\begin{equation*}\label{4.8}
\left\|u\right\|_{H^1}^2<\left\|u\left(t_1\right)\right\|_{H^1}^2e^{-2\varsigma\left(t-t_1\right)},\;t\in\left(t_1,+\infty\right).
\end{equation*}
Theorem \ref{theorem3} is proved.

\textbf{Proof of Theorem \ref{theorem4}.}
By the continuity of $I(u)$ and $J(u)$ with respect to $t$, there exists a sufficiently small $t_0>0$ such that $I(u(t_0))<0$ and $J(u(t_0))>0$ for $I(u_0)<0$ and $J(u_0)=d>0$. Considering (\ref{3.33}),  it is easy to see that $u_t\neq0$ for $0<t\leq t_0$.  Then, we
have
\begin{equation*}\label{4.9}
J(u(t_0))=d-\int_0^{t_0}\left\|u_\tau\right\|_{H^1}^2\operatorname d\tau=d_0<d.
\end{equation*}
Applying Proposition \ref{2.1} and taking $t=t_0$ as the initial time, we obtain $u(x,t)\in V$ for $t>t_0$.

The rest is similar to the proof of Theorem \ref{theorem2}.

\section{Supercritical initial energy $J\left(u_0\right)>d$}

In this section, we shall prove the global existence, uniqueness and asymptotic behavior of solutions to problem (\ref{1.1}) for the supercritical initial energy $J\left(u_0\right)>d$ by the properties of $\omega$-limits of solutions.

\textbf{Proof of Theorem \ref{theorem5}.}

\textbf{Global existence and uniqueness}. Consider the basis $u_m\left(x,t\right)$ as in Theorem \ref{theorem1}, we claim that $u_m\in{\mathcal N}_+$.

Assuming that the assertion is not tenable, then there exists a $t_0>0$ such that $u_m\in{\mathcal N}_+$ for $t\in\left(0,t_0\right)$ and $u_m\left(t_0\right)\in{\mathcal N}$. By (\ref{3.22}), we have $\int_0^t\left\|u_{m\tau}\right\|_{H^1}^2\operatorname d\tau\neq0$ for $\Omega\times\left(0,t_0\right)$. Further, it follows from (\ref{3.4}) that $u_m(t_0)\in J^{J\left(u_m\left(0\right)\right)}$. Hence, $u_m(t_0)\in\mathcal N^{J\left(u_m\left(0\right)\right)}$. Due to the definition of $\lambda_{J(u_0)}$, we have
\begin{equation}\label{5.1}
  \left\|u_m\left(t_0\right)\right\|_{H^1}^2\geq\lambda_{J(u_0)}.
\end{equation}

Combining (\ref{3.22}), by the fact $I\left(u_m\right)>0$, gives
\begin{equation*}
\left\|u_m\left(t_0\right)\right\|_{H^1}^2<\left\|u_m\left(x,0\right)\right\|_{H^1}^2\leq\lambda_{J(u_0)},
\end{equation*}
which contradicts with (\ref{5.1}). Then, for all $t_0>0$, $u_m\in{\mathcal N}_+$.

Further, we can get $u_m\left(t\right)\in J^{J\left(u_m\left(0\right)\right)}\cap{\mathcal N}_+$. On the other hand, (\ref{3.4}) yields
\begin{equation*}
J\left(u_m\left(0\right)\right)\geq J\left(u_m\left(t\right)\right)\geq\left(\frac1p-\frac1{q+1}\right)\left\|\nabla u_m\right\|_p^p+\frac1{q+1}I\left(u_m\right),
\end{equation*}
and
\begin{equation*}
  J\left(u_m\left(0\right)\right)\geq\int_0^t\left\|u_{m\tau}\right\|_{H^1}^2\operatorname d\tau,
\end{equation*}
which imply boundedness of $\left\|\nabla u_m\right\|_p^p$ and $\int_0^t\left\|u_{m\tau}\right\|_{H^1}^2\operatorname d\tau$.

Similar to the proof of Theorem \ref{theorem1}, then problem (\ref{1.1}) has a unique global weak solution $u$.

\textbf{Asymptotic behavior}. We denote by $\omega\left(u_0\right)=\underset{t\geq0}\cap\overline{\left\{u\left(s\right):s\geq t\right\}}$ the $\omega$-limit of $u$. By (\ref{3.4}) and (\ref{3.22}), we have
\begin{equation*}
  \left\|\omega\right\|_{H^1}^2<\left\|u_0\right\|_{H^1}^2\leq\lambda_{J\left(u_0\right)},\;J\left(\omega\right)\leq J\left(u_0\right),
\end{equation*}
for any $\omega\in\omega\left(u_0\right)$, which means that $\omega\not\in\mathcal N^{J\left(u_0\right)}$ and $\omega\in J^{J\left(u_0\right)}$, i.e., $\omega\not\in\mathcal N$. Then, $\omega\left(u_0\right)\cap\mathcal N=\varnothing$, which implies $\omega\left(u_0\right)=\left\{0\right\}$. Therefore, $u\left(t\right)\rightarrow0$ as $t\rightarrow+\infty$.

Theorem \ref{theorem5} is proved.

\section{Conclusions}

In this paper, we've studied the global existence and finite time blowup of solutions for a mixed pseudo-parabolic $p$-Laplacian type equation. Compared with the research in \cite{15}, we replace the $\triangle u$ term in the equation with the $\mathrm{div}\left(\left|\nabla u\right|^{p-2}\nabla u\right)$ term according to actual physical background. By combining the Galerkin method and the theory of potential wells, we first present the explicit expression for the depth $d$ of potential well, and then overcome the difficulties in a priori estimation of $p$-Laplacian term and nonlinear term $\bbint_\Omega\left|u\right|^{q-1}u\operatorname dx$ in the equation. In Section 5, we overcome the limitation of the potential well family method in the $J\left(u_0\right)>d$ case by further analyzing the properties of $\omega$-limits of solutions. However, there exist many problems to be further studied such as in fractional order equations and others.



\section*{Availability of data and material}
  Not applicable.

\section*{Competing interests}
  The authors declare that they have no competing interests.

\section*{Funding}
This research was supported by the National Natural Science Foundation of China (No. 12071491).

\section*{Author's contributions}
    The authors contributed equally to the writing of this paper. All authors read and approved the final manuscript.


\section*{References}


\bibliographystyle{bmc-mathphys}

\end{document}